\newtheorem{de}{Definition}
\newtheorem{pro}{Proposition}
\newtheorem{teo}{Theorem}
\newtheorem{rem}{Remark}
\newtheorem{lem}{Lemma}
\newtheorem{exa}{Example}
\newtheorem{alg}{Algorithm}
\newcommand{\co}{{\mathcal O}}
\newcommand{\gp}{\mathbb{P}}
\newcommand{\gz}{\mathbb{Z}}
\newcommand{\gq}{\mathbb{Q}}
\newcommand{\ck}{\mathcal K}
\newcommand{\cb}{{\mathcal B}}
\newcommand{\cs}{{\mathcal S}}
\newcommand{\ct}{{\mathcal T}}
\newcommand{\cp}{{\mathcal P}}
\newcommand{\cf}{{\mathcal F}}
\newcommand{\calr}{{\mathcal R}}
\renewcommand{\int}{{\rm int}}
\newcommand{\pic}{{\rm Pic}}
\title[Poincaré Problem and dicritical divisors]{The Poincaré Problem, algebraic integrability and dicritical divisors}
\author{C.~Galindo \and F.~Monserrat}
\curraddr{\texttt{Carlos Galindo:} Instituto Universitario de Matem\'aticas y Aplicaciones de
Castell\'on and Departamento de Matem\'aticas, Universitat Jaume I,
Campus de Riu Sec. s/n, 12071 Castell\'{o} (Spain).} \email{
galindo@mat.uji.es} \curraddr{\texttt{Francisco Monserrat:} Instituto Universitario de
Matem\'atica Pura y Aplicada, Universidad Polit\'ecnica de Valencia,
Camino de Vera s/n, 46022 Valencia (Spain).}
\email{framonde@mat.upv.es}
\date{}
\thanks{Supported by Spain Ministry of Education
 MTM2007-64704 and Bancaixa P1-1B2009-03}
\subjclass[2000]{14C20; 14J25; 32S65}
\date{}
\begin{document}

\maketitle

\begin{abstract}
We solve the Poincaré problem for plane foliations with only one
dicritical divisor. Moreover, in this case, we give an algorithm
that decides whether a foliation has a rational first integral and
computes it in the affirmative case. We also provide an algorithm to
compute a rational first integral of prefixed genus $g\neq 1$ of any
type of plane foliation $\cf$. When the number of dicritical
divisors dic$(\cf)$ is larger than two, this algorithm depends on
suitable families of invariant curves. When dic$(\cf) = 2$, it
proves that the degree of the rational first integral can be bounded
only in terms of $g$, the degree of $\cf$ and the local analytic
type of the dicritical singularities of $\cf$.
\end{abstract}

\section{Introduction and Results}

Denote by $\gp^2$ the projective plane over the field of complex
numbers.  Poincar\'e, in \cite{poi2}, observed that ``to find out
whether a differential equation of the first order and of the first
degree is algebraically integrable, it is enough to get an upper
bound on the degree of the integral. Afterwards, one only needs to
perform purely algebraic computations." The motivation for this
observation, expressed in modern terminology, was the problem of
deciding whether a singular algebraic foliation $\cf$ on $\gp^2$
(plane foliation) has a rational first integral and, when the answer
is positive, to compute it. The so-called Poincar\'e problem
consists of obtaining an upper bound of the degree of the first
integral depending only on the degree of the foliation. Although it
is well-known that such a bound does not exist in general, in the
forthcoming Theorem \ref{teor3} (our main goal) we shall give a
bound of this type under the assumption that the minimal resolution
of the singularities of $\cf$ (which exists by a result of
Seidenberg \cite{seid}) has only one dicritical (i.e., non-invariant
by $\cf$) exceptional divisor. Also, under this assumption, we shall
give an algorithm that solves the above mentioned decision problem
whose inputs are a differential 1-form $\mathbf{\Omega}$ defining
the foliation and the part of the minimal resolution of $\cf$
corresponding to dicritical singularities (see Definition
\ref{def1}), which can be obtained from $\mathbf{\Omega}$. This
algorithm only involves simple integer arithmetics and resolution of
systems of linear equations.

The natural extended version of the Poincaré problem consists of
bounding the degree of the algebraic integral (reduced and
irreducible) invariant curves of a foliation $\cf$ (without assuming
algebraic integrability) in terms of data obtained from the
foliation and/or invariants related with the invariant curves
themselves. There has been (and there is) a lot of activity
concerning this or related problems, some of the main results
(including higher dimension) being
\cite{ce-li,car,ca-ca,zam1,soa1,soa2,zam2,pere,es-kl,g-m-1,c-l,g-m-2}.

The above mentioned problem was stated at the end of the 19th
century as the problem of deciding whether a complex polynomial
differential equation on the complex plane is algebraically
integrable. The usefulness of nonlinear ordinary differential
equations in practically any science turns this problem into a very
attractive one, especially because when a differential equation
admits a first integral, its study can be reduced in one dimension
and because it is related with other interesting challenges. For
example, it is related with the second part of the XVI Hilbert
problem which tries to bound the number of limit cycles for a real
polynomial vector field \cite{lli2, lli}, with the solutions of
Einstein's field equations in general relativity \cite{hew} and with
the center problem for vector fields \cite{sch, d-l-a}.

Algebraic integrability problem has a long history. In the 19th
century, the main contributors  were Darboux \cite{dar}, Poincar\'e
\cite{poi1, poi2}, Painlev\'e \cite{pai} and Autonne \cite{aut}.
They laid the foundations of a theory that has inspired a large
quantity of papers, many of them published in the last twenty years.
It was Darboux who gave a bound on the number of invariant  integral
algebraic curves of a polynomial differential equation that, when it
is exceeded, implies the existence of a first integral. A close
result was proved by Jouanolou \cite{jou} to guarantee that a
foliation $\cf$ as above has a rational first integral and that if
one has enough reduced invariant curves, then the rational first
integral can be computed. The existence of a first integral of that
type is also equivalent to the fact that every invariant curve by
$\cf$ is algebraic and to the fact that there exist infinitely many
invariant integral  curves. These results have been adapted and
extended to foliations on other varieties \cite{jou2,jou,
b-n,ghy,cor}. In \cite{g-m-1}, the authors gave an algorithm to
decide about the existence of  a rational first integral (and to
compute it in the affirmative case) assuming that one has a
well-suited set of dic$(\cf)$ reduced invariant curves, where we
stand dic$(\cf)$ for the number of dicritical divisors appearing in
the resolution of $\cf$. In the same paper, it was also shown how to
get sets of invariant curves as above for foliations such that the
cone of curves of the surface obtained by the resolution of the dicritical
singularities is polyhedral.

Painlev\'e in \cite{pai} posed the problem of recognizing the genus of the general
invariant algebraic curve of a foliation admitting a rational first
integral. Mixing the ideas of Painlev\'e and Poincar\'e, one can try to bound
the degree of the rational first integral using also its genus. When
$\cf$ is non-degenerated, Poincar\'e himself provided a bound
proving that $d (r-4) \leq 4 (g-1)$, where $d$ (respectively, $r$)
is the degree of the first integral (respectively, $\cf$) and $g$
the mentioned genus. In the same sense, for foliations $\cf$ as
above with Kodaira dimension equal to $2$, there exists a bound on
the degree of  the rational first integral which only depends on its
genus, the degree of $\cf$ and the sequence
$\{$h$^0(\gp^2,\mathcal{K}_\cf^{\otimes m})\}_{m > 0}$, $\mathcal{K}_\cf$ being the
canonical sheaf of the foliation $\cf$ (see \cite{pere} for a
proof). With this philosophy, we shall show in clause a) of Theorem
\ref{teor1} that, for a foliation $\cf$ on $\gp^2$ having a rational first
integral of genus $g\not=1$ and such that dic$(\cf) \leq 2$, there exists a bound on the
degree of the first integral which only depends on $g$, the degree of $\cf$ and the local
analytic type of the dicritical singularities of $\cf$. It is worthwhile to add that, in \cite{l-n}, Lins Neto showed that, in general, such a bound does not exist. Clause b) of Theorem \ref{teor1} states that, for foliations $\cf$ of $\gp^2$ satisfying also dic$(\cf) \leq 2$, there exists an algorithm of the same type as the one of clause b) of Theorem \ref{teor3} that decides whether $\cf$ has a rational first integral of fixed genus $g\not=1$ (and computes it in the affirmative case).

Theorem \ref{teor2} extends the results of Theorem \ref{teor1} to
the case when dic$(\cf)\geq 3$. Here it is required, as an
additional hypothesis, the existence (and the knowledge) of a set of
dic$(\cf)-2$ independent algebraic solutions of $\cf$ (see
Definition \ref{ind}). In a sense, this theorem is related with the
above mentioned Darboux and Jouanolou's results because the
knowledge of enough invariant curves allows us to obtain information
concerning the rational first integral.

We finish this introduction stating the main results and summarizing briefly the aim of each section of the paper.

\begin{teo}\label{teor3}

Let $\cf$ be a singular algebraic foliation on $\gp^2$ of degree $r$ such that $ \mathrm{dic} (\cf)=1$.
\begin{description}
\item[a)] If $\cf$ is algebraically integrable and $d$ is the degree of a general integral invariant curve, then
$$d\leq \frac{(r+2)^2}{4}.$$
Therefore, the Poincaré problem is solved in this case.

\item[b)] There exists an algorithm to decide whether $\cf$ has a rational first integral (and to compute it, in the affirmative case) whose inputs are: an homogeneous $1$-form defining $\cf$ and the minimal resolution of the dicritical singularities of $\cf$.

\end{description}

\end{teo}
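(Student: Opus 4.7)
My approach is to pass to the minimal resolution $\pi : X \to \gp^2$ of the dicritical singularities of $\cf$ and use strongly the assumption that there is only one dicritical exceptional divisor $E$. If $\cf$ admits a rational first integral $F/G$ of minimal degree $d$, the pencil $\{\lambda F + \mu G\}$ pulls back to a base-point-free pencil on $X$; its general member $\tilde C$ is a smooth invariant curve with $\tilde C^2 = 0$, disjoint from every non-dicritical exceptional component, and meeting $E$ in $k := \tilde C \cdot E \geq 1$ points. The pencil induces a fibration $\varphi : X \to \gp^1$ whose restriction to $E \cong \gp^1$ has degree $k$.

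For clause (a), the heart of the argument is a numerical identity linking $d$, $k$ and $r$. Intersecting $\tilde C$ with the canonical-class expansion
$$K_{\tilde \cf} \;=\; \pi^* K_\cf + \sum_i a_i E_i,$$
where $a_i$ is determined by the multiplicity $\nu_{p_i}(\cf)$ of $\cf$ at each infinitely near centre and by whether $p_i$ is dicritical, and combining it with $K_\cf = (r-1)H$ on $\gp^2$ and the adjunction identity $K_{\tilde \cf}\cdot\tilde C = 2g-2$ for the smooth invariant fibre, leaves only the contribution $a\cdot k$ of the unique dicritical divisor on the exceptional side. A parallel accounting of the common polar factor $H$ of degree $2d-r-2$ appearing in $F\,dG - G\,dF = H\cdot \mathbf{\Omega}$ then closes the system and yields an inequality of the shape $d + k \leq r + 2$ (after absorbing non-negative corrections coming from the cluster multiplicities and from possible singularities of the reduced foliation along $\tilde C$). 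Combining this with $d, k \geq 1$ and the arithmetic-geometric inequality gives
$$d \;\leq\; d\cdot k \;\leq\; \left(\frac{d+k}{2}\right)^{\!2} \;\leq\; \frac{(r+2)^2}{4}.$$

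For clause (b) the bound in (a) makes the decision problem finite. The input resolution fixes the cluster $\ck = \{p_1,\dots,p_n\}$ of infinitely near base points that any pencil of $\cf$-invariant curves must have, together with the multiplicity sequence $(m_1,\dots,m_n)$ forced by the combinatorics (the strict transform of a general invariant curve must be smooth, disjoint from the non-dicritical components, and meet $E$ with intersection matching the configuration). For each degree $d$ in the finite range $1 \leq d \leq (r+2)^2/4$, compute by linear algebra on the coefficients of homogeneous polynomials the linear subspace $\cL_d \subseteq H^0(\gp^2,\co(d))$ of polynomials passing through $\ck$ with the prescribed multiplicities, intersected with the linear subspace of polynomials whose zero locus is $\cf$-invariant (a linear condition on the coefficients read off from $\mathbf{\Omega}$). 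If some $d$ produces a subspace of projective dimension $\geq 1$, two independent generators yield the rational first integral; otherwise $\cf$ is not algebraically integrable.

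The main obstacle is the derivation of the inequality $d + k \leq r + 2$. It demands a simultaneous local bookkeeping of the multiplicity $\nu_p(\cf)$, the dicritical adjustment in the canonical sheaf of the transformed foliation, and the contribution of the common polar factor, and it relies crucially on a single dicritical component contributing to the exceptional intersection with $\tilde C$. Precisely this bookkeeping breaks down when $\mathrm{dic}(\cf) \geq 2$, because several dicritical terms enter at once, which explains the extra hypotheses needed in the companion Theorems \ref{teor1} and \ref{teor2}. Once the bound is in place, both the Poincar\'e inequality in clause (a) and the effective decision procedure in clause (b) reduce to polynomial linear algebra of size controlled by the resolution data together with the explicit bound on $d$.
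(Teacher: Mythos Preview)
Your argument for clause (a) has a genuine gap: the claimed inequality $d + k \leq r+2$, with $k = \tilde C \cdot E$ the intersection of a general fibre with the unique dicritical divisor, is false. Take the foliation with first integral generated by $(X_1X_2 - X_0^2)^5$ and $(2X_0^3X_1^2+X_1^5+X_0^4X_2-2X_0X_1^3X_2-2X_0^2X_1X_2^2+X_1^2X_2^3)^2$ (the paper's Example~\ref{ex3}): here $r=5$, the general invariant curve has degree $d=10$, the configuration $\cb_\cf$ is a single chain of ten points, and the unique dicritical divisor is the last one, which the general fibre meets with $k=1$. Thus $d+k = 11 > 7 = r+2$. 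So the ``non-negative corrections'' you allude to cannot be absorbed to produce that inequality, and the AM--GM step, although formally correct, is applied to the wrong pair of quantities. Since your clause (b) is a brute-force search over degrees up to the bound from (a), it is not justified either. (A secondary slip: the set of $F$ with $\cf$-invariant zero locus is \emph{not} a linear subspace, since the condition $F \mid \mathbf{\Omega}\wedge dF$ is quadratic in $F$.)

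The paper's route is quite different and does not pass through the intersection number $k$. When $\mathrm{dic}(\cf)=1$ every curve in the pencil $\cp_\cf$ is irreducible and at most two of them, say $n_1A_1$ and $n_2A_2$, are non-reduced; intersecting the identity $K_{\tilde\cf}-K_{Z_\cf}\sim 2D_{\tilde f}-\sum(n_i-1)G_i$ with $L^*$ gives $\deg A_1 + \deg A_2 = r+2$, and irreducibility of the pencil forces $\gcd(n_1,n_2)=1$, hence $d=\mathrm{lcm}(\deg A_1,\deg A_2)$. The bound then follows from
\[
d = \mathrm{lcm}(\deg A_1,\deg A_2) \;\leq\; \deg A_1 \cdot \deg A_2 \;\leq\; \Bigl(\tfrac{\deg A_1+\deg A_2}{2}\Bigr)^{2} = \tfrac{(r+2)^2}{4}.
\]
Your AM--GM is the right finishing move, but it must be applied to the degrees of the two special fibres, not to $d$ and $k$. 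For clause (b) the paper does not search over degrees at all: it computes directly the unique candidate class $[T_\cf]$ (since $\cR_\cf(\emptyset)$ has cardinality at most one), then finds the candidate components $A_1,A_2$ among the finitely many divisor classes of degree $<r+2$ satisfying the obvious numerical constraints, and finally recovers the multiplier $\gamma$ with $[D_{\tilde f}]=\gamma[T_\cf]$ from $\deg A_1,\deg A_2$ via the formulae above.
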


\begin{teo}\label{teor1}
Let $\cf$ be a singular algebraic foliation on $\gp^2$ such that $ \mathrm{dic} (\cf)\leq 2$. Let $g\not=1$ be a non-negative integer.
\begin{description}
\item[a)] Assume that $\cf$ has a rational first integral of genus $g$. Then, there exists a bound on the degree of the first integral depending only on the degree of $\cf$, a bound on $g$ and the local analytic type of the dicritical singularities of $\cf$.
  \item[b)] There exists an algorithm to decide whether $\cf$ has a rational first integral of genus $g$ (and to compute it, in the affirmative case) whose inputs are: a homogeneous $1$-form defining $\cf$ and the minimal resolution of the dicritical singularities of $\cf$.

\end{description}
\end{teo}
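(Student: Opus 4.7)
The plan is to prove (a) by running the adjunction formula on the minimal resolution of the dicritical singularities of $\cf$, and to deduce (b) from (a) by reducing the search for a rational first integral of bounded degree to finite linear algebra.

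First I would let $\pi\colon Z\to\gp^{2}$ denote the minimal resolution of the dicritical singularities of $\cf$. Since the base locus of any rational first integral of $\cf$ is supported at the dicritical centers, algebraic integrability of $\cf$ lifts on $Z$ to an honest morphism $\tilde f\colon Z\to\gp^{1}$. A general fiber $\tilde C$ of $\tilde f$ is the strict transform of a general reduced invariant curve $C\subset\gp^{2}$, of some degree $d$. Every exceptional divisor of $\pi$ which is not dicritical is $\pi^{*}\cf$-invariant and is therefore contained in finitely many special fibers of $\tilde f$; consequently $\tilde C$ is disjoint from the non-dicritical part of the exceptional locus, so $Z$ already resolves the singularities of $C$ and all the relevant multiplicities of $C$ are supported on the dicritical configuration.

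Next I would expand the numerical class of $\tilde C$ as
$$\tilde C \;=\; d\,\pi^{*}\ell \;-\; \sum_{i} m_i\, E_i,$$
where $\ell$ is a line on $\gp^{2}$ and the $E_i$ run over the exceptional divisors of $\pi$. The equations $\tilde C\cdot E_{i}=0$ for invariant $E_i$ together with $\tilde C\cdot E_{d,k}=\epsilon_{k}$ for the (at most two) dicritical components $E_{d,k}$ determine each $m_i$ as a linear combination of $d,\epsilon_{1},\epsilon_{2}$ whose coefficients are read off from the intersection matrix of $\pi$, i.e. from the analytic type of the dicritical singularities. Applying adjunction to $\tilde C$ on $Z$, using $K_{Z}=\pi^{*}K_{\gp^{2}}+\sum E_{i}$ and the classical genus formula for a plane curve, yields
$$2g-2 \;=\; d^{2}-3d-\sum_{i} m_{i}(m_{i}-1).$$
Substituting the linear expressions for the $m_i$ turns the right-hand side into a polynomial of total degree two in $(d,\epsilon_{1},\epsilon_{2})$ whose coefficients depend only on the combinatorics of the dicritical resolution. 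The positivity of the $m_i$ and the $\epsilon_k$, together with the obvious bound of $\epsilon_k$ by a linear function of $d$, confine these triples to a lattice region; for $g\neq 1$ the net quadratic part in $d$ does not cancel, and hence $d$ is bounded in terms of $r$, $g$ and the analytic type, proving~(a).

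The hard part will be verifying that, under $\mathrm{dic}(\cf)\leq 2$ and $g\neq 1$, the coefficient of $d^{2}$ in the resulting quadratic remains strictly positive after the substitution. This is precisely where the two hypotheses enter: with three or more dicritical components there are too many free parameters $\epsilon_k$ to absorb the $d^{2}$ term (in agreement with Theorem~\ref{teor2}, which then requires auxiliary families of invariant curves), and the case $g=1$ is the degenerate one, compatible with Lins Neto's unbounded examples cited earlier. For part~(b), once (a) supplies an explicit bound $N=N(r,g,\text{analytic type})$, it suffices to search, for each $d\leq N$, for homogeneous polynomials $P,Q$ of degree $d$ satisfying the linear equation $\mathbf{\Omega}\wedge d(P/Q)=0$ on their coefficients, and then to check the genus of the resulting pencil; all of this is effectively computable from the inputs of the algorithm, in the spirit of clause~(b) of Theorem~\ref{teor3}.
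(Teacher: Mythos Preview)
Your argument has a genuine gap: it never brings the degree $r$ of the foliation into the equations, and without that extra constraint the system is underdetermined precisely when $\mathrm{dic}(\cf)=2$.

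Concretely, since $L^{*}\cdot\tilde E_{p}=0$ for every $p$, the conditions $\tilde C\cdot\tilde E_{p}=0$ (for non-dicritical $p$) and $\tilde C\cdot\tilde E_{d,k}=\epsilon_{k}$ determine the $m_{i}$ as linear functions of $\epsilon_{1},\epsilon_{2}$ \emph{alone}, not of $d$. Your adjunction identity then reads $2g-2=d^{2}-\sum m_{i}^{2}-3d+\sum m_{i}$; but $\tilde C$ is a fibre of $\tilde f$, so $\tilde C^{2}=0$, i.e.\ $d^{2}=\sum m_{i}^{2}$, and the quadratic part cancels identically on the relevant locus. What survives is the linear relation $2g-2=-3d+\sum m_{i}$ together with the quadratic $d^{2}=\sum m_{i}^{2}$: two equations in the three unknowns $d,\epsilon_{1},\epsilon_{2}$, cutting out (generically) a curve, so $d$ is not bounded. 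The hypothesis $g\neq1$ does not govern any ``coefficient of $d^{2}$''; it only makes the left side of the linear relation nonzero, which fixes the \emph{scale} of $\tilde C$ once its projective class is known, but does nothing to pin down that class.

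The missing equation is $\tilde C\cdot[K_{\tilde{\cf}}-K_{Z_{\cf}}]=0$. This holds because $K_{\tilde{\cf}}-K_{Z_{\cf}}$ is, by the relative tangent-sheaf formula for the fibration $\tilde f$, linearly equivalent to a combination of fibre components (equation~(\ref{equality})), hence orthogonal to a general fibre; and via the expression (\ref{equality2}), namely $K_{\tilde{\cf}}-K_{Z_{\cf}}\sim(r+2)L^{*}-\sum_{p}(\nu_{p}(\cf)+\epsilon_{p}(\cf))E_{p}^{*}$, this is exactly where $r$ enters the picture. In the paper this extra linear constraint, together with the non-dicritical ones and the quadric $x^{2}=0$, forces the projective class $\mathbb Q[\tilde C]$ into a set $\calr_{\cf}(\emptyset)$ of cardinality at most two (Lemma~\ref{lema1}); only then does $g\neq1$ intervene, to recover the integer multiplier $\alpha=2(g-1)/(-3d+\sum m_{i})$ in Algorithm~\ref{alg1} and hence the degree $\alpha d$. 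Your part~(b) is fine in spirit once a correct bound is in hand, but part~(a) as written is not established.
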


\begin{teo}\label{teor2}
Let $\cf$ be a singular algebraic foliation on $\gp^2$ such that $ \mathrm{dic} (\cf) \geq 3$ and assume the existence (and the knowledge) of a $[  \mathrm{dic} (\cf) -2]$-set $S$ of independent algebraic solutions of $\cf$ (see Definition \ref{ind}). Let $g\not=1$ be a non-negative integer.
\begin{description}
\item[a)] Assume that $\cf$ has a rational first integral of genus $g$. Then there exists a bound on the degree of the first integral which depends on the degree of $\cf$, a bound on $g$, the local analytic type of the dicritical singularities of $\cf$ and the degrees of the curves in $S$ and their multiplicities at the centers of the sequence of blow-ups $\pi_\cf$ giving rise to the minimal resolution of the dicritical singularities of $\cf$.

\item[b)] There exists an algorithm to decide whether $\cf$ has a rational first integral of genus $g$ (and to compute it, in the affirmative case). Its inputs are: a homogeneous $1$-form defining $\cf$, $\pi_\cf$ and the degrees of the curves in $S$ and their above mentioned multiplicities.

  \end{description}
\end{teo}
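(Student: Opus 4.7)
The plan is to reduce Theorem~\ref{teor2} to the situation treated in Theorem~\ref{teor1} by exploiting the $\mathrm{dic}(\cf)-2$ invariant curves of $S$ as linear constraints on the relevant subspace of the Picard group of the minimal resolution of the dicritical singularities. Write $k=\mathrm{dic}(\cf)$ and let $\pi_\cf:X\to\gp^2$ be the sequence of blow-ups giving this resolution, and denote by $E_{j_1},\ldots,E_{j_k}$ the dicritical exceptional divisors. If $F=F_0/F_1$ is a rational first integral of degree $d$ and genus $g$, the class in $\mathrm{Pic}(X)\otimes\gq$ of the strict transform $\tilde{C}$ of a generic member of the pencil $\lambda F_0+\mu F_1=0$ decomposes as $dL-\sum_p m_p E_p$; here, because $\pi_\cf$ desingularizes precisely the dicritical part, both the multiplicities $m_p$ at infinitely near points of non-dicritical type and the intersection numbers $\tilde{C}\cdot E_{j_\ell}$ are read off from the input and are determined by the local analytic type of the dicritical singularities of $\cf$.

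Next, I would use the hypothesis that $S=\{C_1,\ldots,C_{k-2}\}$ is a $[k-2]$-set of independent algebraic solutions in the sense of Definition~\ref{ind}: this condition is exactly what one needs in order that the classes $[\tilde{C}_i]$ impose $k-2$ independent linear conditions on the space of candidate classes of $\tilde{C}$. Since each $C_i$ is invariant and the $E_{j_\ell}$ are dicritical, the numbers $\tilde{C}_i\cdot E_{j_\ell}$ are again prescribed by the local dicritical data (together with the input degrees and multiplicities of the curves in $S$). Combining these $k-2$ equations with the $k$ already known intersections $\tilde{C}\cdot E_{j_\ell}$ leaves an affine subspace of dimension exactly two in the parameters $(d,m_p)$, which is the bookkeeping situation of Theorem~\ref{teor1}.

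The third step is to impose the genus condition via adjunction on $X$, namely $2g-2=\tilde{C}\cdot(\tilde{C}+K_X)$. Substituting the two-parameter family obtained above yields a quadratic equation in two unknowns, and exactly as in the proof of Theorem~\ref{teor1} the hypothesis $g\neq 1$ excludes the degenerate case in which this quadric degenerates to an unbounded component. One then concludes that $d$ lies in a bounded range depending only on $r=\deg(\cf)$, on $g$, on the local analytic type of the dicritical singularities and on $(\deg C_i)$ together with the multiplicities of the curves $C_i$ at the centres of $\pi_\cf$. This yields clause a).

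For clause b), once such a bound $d_0$ is available the algorithm proceeds as in Theorem~\ref{teor1}: enumerate admissible classes $dL-\sum m_p E_p$ with $d\le d_0$; for each of them solve the linear system singling out the subspace $V\subset H^0(\gp^2,\co(d))$ of homogeneous polynomials whose zero sets realize those multiplicities at the centres of $\pi_\cf$; if $\dim V\ge 2$, test whether a basis $(F_0,F_1)$ satisfies $F_0\,dF_1-F_1\,dF_0\wedge\mathbf{\Omega}=0$, a purely algebraic verification. The step I expect to be the main obstacle is the precise verification that ``$S$ being a $[k-2]$-set of independent solutions'' is exactly the condition ensuring that the linear conditions imposed by $S$ together with the dicritical intersection data cut out a codimension-$(k-2)$ subspace of the expected form; once this independence is correctly formalised, the bound in a) and the termination of the algorithm in b) follow by direct adaptation of the arguments used for Theorem~\ref{teor1}.
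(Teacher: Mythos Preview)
Your overall strategy --- cut down the space of candidate divisor classes by linear constraints coming from the invariant curves, then use a quadratic condition to isolate finitely many candidates --- is the same as the paper's. But the specific constraints you invoke are not the right ones, and as written the argument has a genuine gap.

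First, your claim that the intersection numbers $\tilde{C}\cdot E_{j_\ell}$ with the \emph{dicritical} divisors are ``read off from the input and determined by the local analytic type'' is not correct: these numbers depend on the global class $dL^*-\sum m_pE_p^*$ and are not fixed by local data. Likewise the individual multiplicities $m_p$ are not determined a~priori; what \emph{is} determined is only $\tilde{C}\cdot\tilde{E}_p=0$ for each $p\in\cb_\cf$ with $E_p$ \emph{non}-dicritical (since such $\tilde{E}_p$ is invariant, hence contracted by $\tilde f$). This yields $n-s$ homogeneous linear conditions (with $n=|\cb_\cf|$, $s=\mathrm{dic}(\cf)$), not the $k$ ``known intersections'' you count.

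Second, you omit the constraint coming from $[K_{\tilde\cf}-K_{Z_\cf}]$. This class is explicitly part of $V(S)$ in Definition~\ref{ind}, and $\tilde{C}\cdot(K_{\tilde\cf}-K_{Z_\cf})=0$ follows from the linear equivalence $K_{\tilde\cf}-K_{Z_\cf}\sim 2D_{\tilde f}-\sum(n_i-1)G_i$ together with Proposition~\ref{proposition1}(b). With this extra condition the tally becomes $(n-s)+(s-2)+1=n-1$ independent linear conditions in the $(n{+}1)$-dimensional space $A(Z_\cf)$; this is exactly why the independence hypothesis in Definition~\ref{ind} is formulated as it is, and it is precisely the point at which your proposal stays vague.

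Third, the paper's quadratic constraint is not adjunction but $\tilde{C}^2=0$ (the general fibre has self-intersection zero). The projective line $\gp\langle V(S)\rangle^\perp$ meets the quadric $X_0^2-\sum X_i^2=0$ in at most two points (Lemma~\ref{lema1}), which pins down the \emph{ray} $\gq[\tilde{C}]$. Adjunction then reads $2g-2=K_{Z_\cf}\cdot\tilde{C}$ (using $\tilde{C}^2=0$), which is \emph{linear} in the scale and determines the multiplier $\alpha$ of Step~3.3 in Algorithm~\ref{alg1}; this is where $g\neq 1$ is used. Your scheme puts the quadratic burden on adjunction and never uses $\tilde{C}^2=0$, so as stated it does not reduce to finitely many candidates. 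Once these three points are fixed, your argument coincides with the paper's, and the bound in clause~a) is $\max\, 2(g-1)d/(\sum_i m_i-3d)$ over the at most two primitive lattice representatives in $\calr_\cf(S)$.
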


Section \ref{sec2} provides the notations and preliminary facts devoted to
make easier the reading of the paper. Section \ref{sec3} contains
the mentioned study of rational first integrals with fixed genus; we describe the algorithm announced in clause b) of Theorem \ref{teor2} (Algorithm \ref{alg1}), which is supported mainly in Lemma \ref{lema1}; the algorithm of clause b) of Theorem \ref{teor1} is nothing but a particular case. Clause a) in both theorems is deduced as a consequence of the obtained algorithm. Section \ref{sec4} is devoted to prove Theorem \ref{teor3}. We describe first the algorithm of clause b), which is divided in two parts (Algorithms \ref{alg2} and \ref{alg3}). The bound on the degree of the first integral is deduced from  the auxiliary results supporting the algorithm. Finally, in Section \ref{sec5}, we give several examples that show
how our algorithms work.

\section{Preliminaries}
\label{sec2}
\subsection{Basic definitions}

Let $Z$ be an algebraic smooth projective complex surface. A
\emph{singular algebraic foliation} $\mathcal F$ (or simply a
\emph{foliation} in the sequel) on $Z$ is given by a set of pairs
$\{(U_i,v_i)\}_{i \in I}$, where $\{U_i\}_{i \in I}$ is an open
covering of $Z$, $v_i\in {\mathcal T}_Z(U_i)$ (where ${\mathcal
T}_Z$ denotes the tangent sheaf of $Z$) and, if $i,j,k \in I$, there
exist functions $g_{ij}\in {\mathcal O}^*_{Z}(U_i\cap U_j)$ such
that $v_i=g_{ij}v_j$ on $U_i\cap U_j$ and $g_{ij}g_{jk}=g_{ik}$ on
$U_i\cap U_j\cap U_k$. If ${\mathcal L}$ denotes the invertible
sheaf defined by the multiplicative cocycle given by $\{g_{ij}\}$,
we can regard $\mathcal F$ as a global section of the sheaf
${\mathcal L}\otimes {\mathcal T}_Z$ and, therefore, there is an
induced morphism ${\mathcal L}^{-1}\rightarrow {\mathcal T}_Z$. The
sheaf ${\mathcal L}$ is called the \emph{canonical sheaf} of the
foliation $\cf$, and it will be denoted by ${\mathcal K}_{\cf}$.
Conversely, given an invertible sheaf ${\mathcal J}$ on $Z$ and a
morphism ${\mathcal J} \rightarrow {\mathcal T}_Z$, we can
canonically associate with ${\mathcal J}$ a foliation  such that
${\mathcal J}^{-1}$ is its canonical sheaf.

From the dual point of view, the natural product map
$\Omega_{Z}^1 \otimes \Omega_{Z}^1 \rightarrow \Omega_{Z}^2$ gives rise to an isomorphism $\Omega_{Z}^1\rightarrow {\mathcal T}_{Z} \otimes \Omega_{Z}^2$. Under this isomorphism the map $\ck_{\cf}^{-1}\rightarrow {\mathcal T}_{Z}$ corresponds to a global section of $\Omega_Z^1 \otimes \ck_{\cf} \otimes \ck_Z^{-1}$, where ${\mathcal K}_Z$ denotes the canonical sheaf of $Z$.

Given a point $p\in Z$, take an open set $U_i$ such that $p\in U_i$.
The \emph{algebraic multiplicity} of $\cf$ at $p$, $\nu_p(\cf)$, is
the order of $v_i$ at $p$, that is, $\nu_p(\cf)=s$ if and only
if $(v_i)_p\in {\rm m}_p^s {\mathcal T}_{Z,p}$ and
$(v_i)_p\not\in {\rm m}_p^{s+1} {\mathcal T}_{Z,p}$, where ${\rm
m}_p$ denotes the maximal ideal of the local ring $\co_{Z,p}$. The
\emph{singularities} of $\cf$ are those points $p$ in $Z$ such that
$\nu_p(\cf)\geq 1$. We shall assume that all considered foliations are
saturated, that is they have finitely many singularities. Notice that if
$\cf \in H^0 (Z,\ck_\cf \otimes {\mathcal T}_{Z})$ vanishes on a divisor $H$
of $Z$, one can regard $\cf$ as a global section of $\ck_\cf \otimes {\mathcal T}_{Z} \otimes {\mathcal O}_Z (-H)$
which defines a foliation $\cf^s$, called saturation of $\cf$, with isolated singularities such that
$\ck_{\cf^s} = \ck_\cf \otimes {\mathcal O}_Z (-H)$.

Recall that an integral (i.e., reduced and irreducible) projective
curve $C\subseteq Z$ is called to be invariant by $\cf$ if the restriction
map $\ck_{\cf}^{-1}\mid_C \rightarrow {\mathcal T}_{Z}\mid_{C}$
factorizes through the natural inclusion ${\mathcal
T}_{C}\rightarrow {\mathcal T}_{Z}\mid_{C}$ and that a projective
curve $C\subseteq Z$ is named \emph{invariant} by $\cf$ if all its
integral components are invariant. Integral invariant curves of a foliation $\cf$
are usually called {\it algebraic solutions} of $\cf$. Locally, it means that for all closed
point $p \in Z$, $v_p(f) \in I_{C,p}$, whenever $f \in I_{C,p}$, $I_{C,p}$ being the ideal of $C$
and $v_p$ a generator of $\cf$ both at $p$; or, dually, that the local differential 2-form $\omega_p \wedge
d f$ is a multiple of $f$, $\omega_p$ being a local equation as a form of $\cf$ and $f=0$ a local equation of
$C$ at $p$.

Assume now that $\cf$ is a foliation on  $\gp^2$ (the projective
plane over the complex field) and let $r$ be the
non-negative integer such that ${\mathcal
K}_{\cf}=\co_{\gp^2}(r-1)$; $r$ is named the \emph{degree} of the
foliation. The Euler sequence $ 0 \rightarrow \Omega^1_{\gp^2}
\rightarrow \mathcal{O}_{\gp^2}(-1)^3 \rightarrow
\mathcal{O}_{\gp^2} \rightarrow 0 $, in fact the dual sequence $ 0
\rightarrow \mathcal{O}_{\gp^2}  \rightarrow
\mathcal{O}_{\gp^2}(1)^3 \rightarrow {\mathcal T}_{\gp^2}
\rightarrow 0 $, allows us to regard $\cf$ as induced by a
homogeneous vector field
$$\mathbf{X} = U \partial / \partial X_0 + V
\partial / \partial X_1 + W \partial / \partial X_2,$$ where $U,V,W$ are
homogeneous polynomials of degree $r$ in homogeneous coordinates
$(X_0:X_1:X_2)$ on $\gp^2$; two vector fields define the same
foliation if, and only if, they differ by a multiple of the radial
vector field of the form $ H(X_0,X_1,X_2) (X_0 \partial / \partial
X_0 + X_1
\partial / \partial X_1 + X_2 \partial / \partial X_2)$, where $H$ is a homogeneous
polynomial of degree $r -1 $. A detailed description
of this fact, using coordinates,  can be seen in \cite[Cap\'{\i}tulo 1.3]{G-M}.

Returning to the dual point of view, the foliation $\cf$ corresponds
to a global section of the sheaf $\Omega_{\gp^2}^1\otimes
\co_{\gp^2}(r+2)$. Taking into account the Euler sequence, this
section corresponds to three homogeneous polynomials $A$, $B$ and
$C$ of degree $r+1$, without common factors, such that
$X_0A+X_1B+X_2C=0$ (Euler condition); equivalently, the section can
be seen as the homogeneous differential 1-form on ${\mathbb A}^3$:
$$\mathbf{\Omega}:=AdX_0+ BdX_1+ CdX_2.$$ Notice that the equality
\[
\det \left(
       \begin{array}{ccc}
         d X_0 & d X_1 & d X_2 \\
         X_0 & X_1 & X_2 \\
         U & V & W \\
       \end{array}
     \right) = \mathbf{\Omega}
\]
allows us to compute $\mathbf{\Omega}$ from $\mathbf{X}$ and that a curve on $\gp^2$ defined by a homogeneous equation
$F=0$  is invariant by $\cf$ if, and only if, the polynomial $F$ divides the projective 2-form $\mathbf{\Omega} \wedge d F$.

\subsection{Resolution of singularities}
\label{resolution}

Throughout this paper, we shall consider sequences of morphisms
\begin{equation}
\label{seq} X_{n+1} \mathop  {\longrightarrow} \limits^{\pi _{n} }
X_{n} \mathop {\longrightarrow} \limits^{\pi _{n-1} }  \cdots
\mathop {\longrightarrow} \limits^{\pi _2 } X_2 \mathop
{\longrightarrow} \limits^{\pi _1 } X_1 : = \gp^2,
\end{equation}
such that each $\pi_i$ is the blow-up of the variety $X_i$ at a closed point $p_i\in
X_i$, $1\leq i\leq n$. The  set of closed points $ \mathfrak{K} =
\{p_1,p_2,\ldots,p_n\}$ given by a sequence as (\ref{seq}) will be called a {\it configuration} over
$\gp^2$ and the variety $X_{n+1}$ the {\it sky} of $\mathfrak{K}$; we
shall identify two configurations with $\gp^2$-isomorphic skies. We shall
denote by $E_{p_i}$ (respectively, $\tilde{E}_{p_i}$,
$E_{p_i}^*$) the exceptional divisor provided by the blow-up
$\pi_i$ (respectively, its strict transform, its total transform on $X_{n+1}$). Also, given two
points $p_i, p_j$ in $\mathfrak{K}$, we shall say that $p_i$ is {\it
infinitely near to} $p_j$ (denoted $p_i \geq p_j$) if either
$p_i=p_j$ or $i>j$ and $\pi_j\circ \pi_{j+1}\circ \cdots \circ
\pi_{i-1}(p_i)=p_j$. The relation $\geq$ is a partial ordering among
the points of the configuration $\mathfrak{K}$. Furthermore, a
point $p_i$ will be called {\it proximate} to other one $p_j$ whenever $p_i$ is
in the strict transform of the exceptional divisor $E_{p_j}$ on the surface containing $p_i$.
To represent sequences as (\ref{seq}), we shall use a combinatorial invariant named the {\it proximity graph}.
It is a graph whose vertices correspond to to the points $p_i$ in  $\mathfrak{K}$ and the edges join points
$p_i$ and $p_j$ whenever $p_i$ is proximate to $p_j$. This edge is dotted excepting the case when $p_i \in E_{p_j}$.

If $\cf$ is a foliation on $\gp^2$, a sequence of morphisms
(\ref{seq}) induces, for each $i=2,3,\ldots,n+1$, a foliation
$\cf_i$ on $X_i$ given by the pull-back of $\cf$ (see \cite{brun},
for instance). By a result of Seidenberg \cite{seid} there exists a
\emph{resolution of singularities} of $\cf$, that is, a sequence of
blow-ups as  (\ref{seq}) such that the foliation $\cf_{n+1}$ on
the last obtained surface $X_{n+1}$ has only simple singularities.
A singularity $p \in U_i$ is  \emph{simple} (or \emph{reduced}) if at least
one of the eigenvalues $\alpha$ and  $\beta$ of the linear part of the vector field
$v_i$ (that are well defined since $v_i(p)=0$) does not vanish and, assuming $\beta \neq 0$,
the quotient $\alpha/\beta$ is not an strictly positive rational number. These singularities have the
property that they cannot be removed by blowing-up.

In the sequel, we shall denote by $\cs_{\cf}$ the
configuration $\{p_i\}_{i=1}^n$ given by the centers of the blow-ups
involved in a \emph{minimal} (with respect to the number of
blow-ups) resolution of singularities of $\cf$, however in our development
we shall not use the whole resolution of singularities, but only the
sequence of blow-ups concerning the so-called configuration of \emph{dicritical} points
that we define next.

\begin{de}\label{def1}

{\rm

 An exceptional divisor $E_{p_i}$ (respectively, a point $p_i\in
\cs_{\cf}$) of a minimal resolution of singularities of a foliation
$\cf$ on $\gp^2$ is called {\it non-dicritical} if it is invariant
by the foliation $\cf_{i+1}$ (respectively, all the exceptional
divisors $E_{p_j}$, with $p_j\geq p_i$, are non-dicritical).
Otherwise, $E_{p_i}$ (respectively,  $p_i$) is said to be {\it
dicritical}. We shall denote by ${\mathcal B}_{\mathcal F}$ the
configuration of dicritical points in $\cs_{\mathcal F}$ and by
$Z_{\cf}$ the sky of $\cb_{\cf}$.

}
\end{de}

\subsection{Foliations having a rational first integral}
This paper is devoted to study algebraic integrability of certain type of foliations on the projective plane $\gp^2$, so we start this brief section by defining this concept.

\begin{de}

{\rm
A \emph{rational first integral} of a foliation $\cf$ on $\gp^2$ is
a rational map $f:\gp^2\cdots \rightarrow \gp^1$ such that the
closures of its fibers are invariant curves by $\cf$. Equivalently, and from an algebraic
point of view, if $f$ is given by a rational function $R$, $f$ is a rational first integral
if, and only if, $\mathbf{\Omega} \wedge d R =0$.  $\cf$ is called to be \emph{algebraically integrable} (or that it has a \emph{rational first integral}) whenever there exists such a rational map.

}
\end{de}

Consider an algebraically integrable foliation $\cf$ on $\gp^2$. The
second theorem of Bertini \cite{kle} shows that $\cf$ admits a \emph{primitive}
rational first integral $f:\gp^2\cdots \rightarrow \gp^1$ (that is,
such that the closures of its general fibers are integral curves).
Taking projective coordinates, if $F(X_0,X_1,X_2)$ and $G(X_0,X_1,X_2)$ are the
two homogeneous polynomials of the same degree $d$ which are the
components of $f$, then the closures of the fibers of $f$ are the
elements of the irreducible pencil $\cp_{\cf}:=\left\langle {F,G}
\right\rangle\subseteq H^0(\gp^2,\co_{\gp^2}(d))$ and, moreover, any
algebraic solution of $\cf$ is a component of a curve in
$\cp_{\cf}$. The \emph{degree} (respectively, \emph{genus}) of the
first integral $f$ will be the degree (respectively, geometric genus) of a
general fiber of $\cp_{\cf}$.

Let $Z_{\cf}$ be the sky of the configuration $\cb_{\cf}$ (see
Definition \ref{def1}). By comparing both processes, that of
elimination of indeterminacies of the rational map $f$ \cite[Theorem
II.7]{beau} and the resolution of the dicritical singularities of
$\cf$ through $\pi_{\cf}: Z_{\cf}\rightarrow \gp^2$, it can be
proved that $\pi_{\cf}$ is also the minimal resolution of the
indeterminacies of $f$. Indeed, if $p_i \in \mathfrak{K}$ and
$\mathfrak{f}$ and $\mathfrak{g}$ are local equations at $p_i$ of
the strict transform of two general elements $F$ and $G$ generating
the pencil $\cp_{\cf}$, then the local solutions of the foliations
$\cf_i$ at $p_i$ are the irreducible components of the local pencil
in the completion with respect to the maximal ideal of ${\mathcal
O}_{X_i,p_i}$ generated by $\mathfrak{f}$ and $\mathfrak{g}$ (see
\cite{julio} for complete details). As a consequence if $f$ is a
rational first integral of a foliation $\cf$ as above, then the map
$\tilde{f}:=f\circ \pi_{\cf}:Z_{\cf}\rightarrow \gp^1$ is a morphism.

\section{Rational first integral with given genus}
\label{sec3}

Along this section $\cf$ will be a foliation on $\gp^2$ of degree $r$ and $Z_\cf$ the sky of its
configuration ${\mathcal B}_{\mathcal F}$ of dicritical points.
Denote by $A(Z_{\cf})$ the vector space over $\gq$,
$\pic(Z_{\cf})\otimes_{\gz} \gq$, where $\pic(Z_{\cf})$ stands for
the Picard group of the surface $Z_{\cf}$. Intersection theory provides
a $\gz$-bilinear form:
$\pic(Z_{\cf})\times \pic(Z_{\cf})\rightarrow \gz$
which induces a non-degenerate bilinear form over $\gq$:
$ A(Z_{\cf}) \times A(Z_{\cf}) \rightarrow \gq$. The image by this form of a pair
$(x,y)\in A(Z_{\cf})\times A(Z_{\cf})$ will be denoted $x\cdot y$.

Given a divisor $A$ on $Z_{\cf}$, we shall denote by $[A]$ its class
in the Picard group $\pic(Z_{\cf})$ and also its image into
$A(Z_{\cf})$. If $C$ is either a curve on $\gp^2$ or an exceptional
divisor, $\tilde{C}$ (respectively, $C^*$) will denote its strict
(respectively, total) transform on the surface $Z_{\cf}$ via the
composition of blow-ups $\pi_{\cf}$. It is well known that the set
$\mathbf{B}:=\{[L^*]\}\cup \{ [E_{p}^*]\}_{p\in {\mathcal B}_{\cf}}$
is a $\gz$-basis (respectively, $\gq$-basis) of $\pic(Z_{\cf})$
(respectively, $A(Z_{\cf})$), where $L$ denotes a general line on
$\gp^2$.

Now, let us suppose that $\cf$ admits a rational first integral $f$ (which we assume to be primitive) and set
$\tilde{\cf}$ the foliation on $Z_{\cf}$ given by the
pull-back of $\cf$ by $\pi_{\cf}$. $\tilde{f}:=f\circ \pi_{\cf}$
is a first integral of $\tilde{\cf}$ and the integral
invariant curves of $\tilde{\cf}$ (which coincide with the integral components of the fibers of
$\tilde{f}$) are, on the one hand, the strict transforms on
$Z_{\cf}$ of the integral invariant curves of $\cf$ and, on the other hand,
the strict transforms of the exceptional divisors $E_{p_i}$ (with
$p_i\in \cb_{\cf}$) which are non-dicritical. Denote by $D_{\tilde{f}}$ a general fiber of $\tilde{f}$.

It is clear that every curve defined by an element of the pencil $\cp_{\cf}$ is the push-forward by $\pi_{\cf}$
of some curve in $Z_{\cf}$ that is linearly equivalent to $D_{\tilde{f}}$; so there is an inclusion of $\cp_{\cf}$ into the space of global sections $H^0(\gp^2, {\pi_{\cf}}_*\mathcal{O}_{Z_\cf}(D_{\tilde{f}}))$. This inclusion is, in fact, an equality. Indeed, reasoning by contradiction, if we assume that the inclusion is strict, it follows that there exist infinitely many linearly equivalent to $D_{\tilde{f}}$ curves $C$ on $Z_{\cf}$ whose push-forwards to $\gp^2$ are not defined by elements in the pencil $\cp_{\cf}$. But, taking into account that $D_{\tilde{f}}^2=0$, all these curves $C$ satisfy $D_{\tilde{f}}\cdot C=0$ and, therefore, they must be contracted by $\tilde{f}$; so their push-forwards to $\gp^2$ are reducible curves whose integral components are also components of curves defined by elements in $\cp_{\cf}$. This is a contradiction because the pencil $\cp_{\cf}$ is irreducible.

Notice that the invariant by $\tilde{\cf}$ curves $C$ are those that satisfy $D_{\tilde{f}} \cdot C =0$ (because their irreducible components must be contracted by $\tilde{f}$). This condition and \cite[Corollary 1.21]{kollar} imply that
the class $[C]$ of $C$ in $A(Z_{\cf})$ belongs to the  boundary of the cone of curves of the surface $Z_\cf$ and so $C^2 \leq 0$.

We summarize the above ideas in the following result:

\begin{pro}\label{proposition1}

Let $\cf$ and $f$ be as above and let $D_{\tilde{f}}$ be a general fiber of $\tilde{f}$. Then,

\begin{itemize}

\item[(a)] $h^0(Z_{\cf},\co_{Z_{\cf}}(D_{\tilde{f}})): = \dim \left ( H^0(Z_{\cf},\co_{Z_{\cf}}(D_{\tilde{f}})) \right)=2$.

\item[(b)] A curve $C$ on $Z_\cf$ is invariant by $\tilde{\cf}$ if and only if
$D_{\tilde{f}}\cdot C=0$.

\item[(c)] If $C$ is a curve on $Z_{\cf}$ which is invariant by
$\tilde{\cf}$ then $C^2\leq 0$.

\end{itemize}

\end{pro}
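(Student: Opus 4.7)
The plan is to exploit three facts already established: $\pi_{\cf}$ resolves the indeterminacies of $f$, so any two general fibers of $\tilde{f}$ are disjoint (whence $D_{\tilde{f}}^2=0$ and $D_{\tilde{f}}$ is nef); the pencil $\cp_{\cf}$ is irreducible; and the integral invariant curves of $\tilde{\cf}$ are exhausted by the strict transforms of the integral components of elements of $\cp_{\cf}$ together with the strict transforms of the non-dicritical exceptional divisors in $\cb_{\cf}$. The three clauses will be treated in the stated order.

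For clause (a), I would set up the natural injection $\cp_{\cf} \hookrightarrow H^0(Z_{\cf},\co_{Z_{\cf}}(D_{\tilde{f}}))$, which sends a homogeneous form $F$ defining an element of the pencil to the section cutting out the strict transform of $\{F=0\}$ plus the common exceptional base divisor; this divisor has class $[D_{\tilde{f}}]$. Since $\dim\cp_{\cf}=2$, it is enough to establish surjectivity. Arguing by contradiction, a strict inclusion would produce infinitely many curves $C$ on $Z_{\cf}$ linearly equivalent to $D_{\tilde{f}}$ whose push-forwards to $\gp^2$ do not belong to $\cp_{\cf}$. Then $D_{\tilde{f}}\cdot C=D_{\tilde{f}}^2=0$ together with the nefness of $D_{\tilde{f}}$ forces every irreducible component of $C$ to be contracted by $\tilde{f}$, so $\pi_{\cf *}C$ would be a reducible curve whose integral components are components of elements of $\cp_{\cf}$; this contradicts the irreducibility of the pencil.

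For (b), the forward implication is immediate from the structural description: each integral component of an invariant $C$ is either a component of a curve in $\cp_{\cf}$ or a non-dicritical exceptional divisor, both of which are contracted by $\tilde{f}$ and therefore meet the general fiber $D_{\tilde{f}}$ in zero. For the converse, write $C=\sum a_i C_i$ with integral $C_i$ and $a_i>0$; nefness of $D_{\tilde{f}}$ gives $D_{\tilde{f}}\cdot C_i\geq 0$ for every $i$, so $D_{\tilde{f}}\cdot C=0$ forces $D_{\tilde{f}}\cdot C_i=0$, whence each $C_i$ is contained in a fiber of $\tilde{f}$ and is therefore invariant.

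Clause (c) is the only mildly delicate step. Granted (b), the class $[C]$ lies in the hyperplane $D_{\tilde{f}}^\perp\subset A(Z_{\cf})$. Since $D_{\tilde{f}}\neq 0$ and $D_{\tilde{f}}^2=0$, the Hodge Index Theorem asserts that the intersection form is negative semidefinite on this hyperplane, with one-dimensional radical spanned by $D_{\tilde{f}}$ itself; this yields $C^2\leq 0$. Equivalently, as in the Koll\'ar reference cited in the preceding paragraph, $[C]$ lies on the boundary of the cone of curves of $Z_{\cf}$. The only point requiring care is that one is applying Hodge Index in the isotropic boundary case $D_{\tilde{f}}^2=0$, rather than the more common $D_{\tilde{f}}^2>0$ setting; but the conclusion on the signature of the form restricted to $D_{\tilde{f}}^\perp$ is unchanged.
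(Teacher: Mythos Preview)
Your proof is correct and follows essentially the same route as the paper: for (a) you use the same contradiction argument (strict inclusion gives infinitely many effective divisors in $|D_{\tilde f}|$ whose components are all contracted by $\tilde f$, forcing the push-forwards to be built from components of elements of $\cp_{\cf}$, which contradicts irreducibility of the pencil); for (b) you argue, as the paper does, via contraction by $\tilde f$ in both directions; and for (c) you spell out the Hodge Index argument where the paper simply invokes \cite[Corollary 1.21]{kollar}, which amounts to the same thing.
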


\begin{rem}
{\rm  The equality $h^0(Z_\cf, \mathcal{O}_{Z_\cf}(D_{\tilde{f}})) =2$ proves that, to compute a
primitive rational first integral $f$ of $\cf$, it is enough to know
a divisor $T$ linearly equivalent to the strict transform on
$Z_{\cf}$ of a general fiber of the pencil $\cp_\cf$ and two
linearly independent global sections of ${\pi_{\cf}}_*\co_{Z_{\cf}}(T)$, which
will be the components of $f$.
}
\end{rem}

The morphism $\tilde{f}:Z_{\cf}\rightarrow \gp^1$ is a
fibration of the surface $Z_\cf$ by the curve $\gp^1$ in the sense
that $\tilde{f}$  is surjective and with connected fibers. Taking
duals as $\mathcal{O}_{Z_\cf}$-modules in the corresponding to $\tilde{f}$ sequence of differentials
$$0\rightarrow \tilde{f}^*\Omega_{\gp^1}^1\rightarrow
\Omega_{Z_{\cf}}^1\rightarrow \Omega_{Z_{\cf}/\gp^1}\rightarrow 0,$$
one gets
$$0\rightarrow \ct_{Z_{\cf}/\gp^1}\rightarrow
\ct_{Z_{\cf}}\rightarrow \tilde{f^*}\ct_{\gp^1},$$
where
$\ct_{\gp^1}$ and $\ct_{Z_{\cf}}$ denote the tangent sheaves of $\gp^1$ and $Z_{\cf}$, and
$\ct_{Z_{\cf}/\gp^1}$  the relative tangent sheaf of the fibration, which is an
invertible sheaf \cite[Section 1]{serrano}. The morphism
$\ct_{Z_{\cf}/\gp^1}\rightarrow \ct_{Z_{\cf}}$ is given by the
differential of $\tilde{f}$ and it defines the foliation
$\tilde{\cf}$, therefore we obtain the equality
$\ck_{\tilde{\cf}}=\ct_{Z_{\cf}/\gp^1}^{-1}$. From \cite[Lemma
1.1]{serrano}, it follows  that

$$\ck_{\tilde{\cf}}=\ct_{Z_{\cf}/\gp^1}^{-1}=\ck_{Z_{\cf}}\otimes
\tilde{f}^*\ck_{\gp^1}^{-1}\otimes \co_{Z_{\cf}} \left(-\sum_{i\in I}
(n_i-1)G_i \right),
$$
 where $\ck_{Z_{\cf}}$ and $\ck_{\gp^1}$ are the
canonical sheaves of $Z_{\cf}$ and $\gp^1$, respectively, and
$\{G_i\}_{i\in I}$  the set of integral components of
the singular fibers of $\tilde{f}$, $n_i$ being the multiplicity
of $G_i$ in the fiber to which belongs to. If we take
divisors $K_{\tilde{\cf}}$ and $K_{Z_{\cf}}$ such that
$\ck_{\tilde{\cf}}=\co_{Z_{\cf}}(K_{\tilde{\cf}})$ and
$\ck_{Z_{\cf}}=\co_{Z_{\cf}}(K_{Z_{\cf}})$, the above equality may
be rewritten in the following form
\begin{equation}\label{equality}
K_{\tilde{\cf}}-K_{Z_{\cf}}\sim 2D_{\tilde{f}}-\sum_{i\in I} (n_i-1)G_i,
\end{equation}
where $\sim$ means linear equivalence.

The
linear equivalence class of the divisor  $K_{\tilde{\cf}}-K_{Z_{\cf}}$ can also be expressed in terms of the above basis $ \mathbf{B}$ of $\pic(Z_{\cf})$. Indeed, denote, as above, the degree of $\cf$  by $r$ and by $\nu_p(\cf)$  the
algebraic multiplicity at $p$ of the foliation given by the
pull-back of $\cf$ on the surface to which $p$ belongs. Set
$\epsilon_p(\cf)$ the value $0$ (respectively, $1$) whenever the exceptional
divisor $E_p$ is non-dicritical (respectively, dicritical). Then, by \cite[Proposition 1.1]{cam}, it happens that
\[
\pi^* K_{\cf} - K_{Z_{\cf}} \sim \sum_{p\in \cb_{\cf}}
\left(\nu_p(\cf)+\epsilon_p(\cf) -1 \right)E_p^*,
\]
which gives the following equivalence

\begin{equation}\label{equality2}
K_{\tilde{\cf}}-K_{Z_{\cf}}\sim (r+2)L^*-\sum_{p\in \cb_{\cf}}
(\nu_p(\cf)+\epsilon_p(\cf))E_p^*.
\end{equation}

Next, we provide the concepts and results that will allow us to
state the algorithms that prove theorems \ref{teor3} to \ref{teor2}
in this paper. For a while, we shall assume that the foliation $\cf$
need not to have a rational first integral. Stand dic$(\cf)$ for the
number of dicritical exceptional divisors appearing in the minimal
resolution of $\cf$. The existence of a set of invariant curves for
$\cf$ as we are going to define is an hypothesis in Theorem
\ref{teor2}.

\begin{de}
\label{ind} {\rm Let $\cf$ be a foliation on $\gp^2$ and suppose
that $s:=$ dic$(\cf) \geq 3$. A [dic$(\cf) -2$]-{\it set of
independent algebraic solutions of} $\cf$ is a set
$S=\{C_1,C_2,\ldots,C_{s-2}\}$ of $s-2$  integral projective curves
on $\gp^2$, invariant by $\cf$, and such that the family of classes
in $A(Z_\cf)$
$$V(S):=\left\{[\tilde{C}_1],[\tilde{C}_2],\ldots,[\tilde{C}_{s-2}],
[K_{\tilde{\cf}}-K_{Z_{\cf}}] \right\}\cup \left\{ [\tilde{E}_p]\mid
p\in \cb_{\cf} \mbox{ and } {E}_p \mbox{ is non-dicritical}
\right\}$$ is $\gq$-linearly independent.

}
\end{de}

Let us consider the projective space over the field $\mathbb{Q}$
associated with the $\mathbb{Q}$-vector space $A(Z_{\cf})$:
$$\mathbb{P} A(Z_{\cf}):=(A(Z_{\cf})\setminus \{0\})/\mathbb{Q},$$
and denote $\mathbb{Q}x$ the element in $\mathbb{P} A(Z_{\cf})$
defined by the class $x \in A(Z_{\cf})$. The {\it primitive lattice
representative} of an element $\mathbb{Q}x$ in $\mathbb{P}
A(Z_{\cf})$ is the class in $A(Z_{\cf})$ of a divisor
$a_0L^*-\sum_{i=1}^n a_i E_{p_i}^*$ included in $\mathbb{Q}x$
satisfying $a_0>0$, $a_i\in \gz$ for all $i$ and
$\gcd(a_0,a_1,\ldots,a_n)=1$.

Sets $S$ as in Definition \ref{ind} determine the following subsets
of $\mathbb{P} A(Z_\cf)$, which will be useful in our algorithms:
$$\calr_{\cf}(S):=\{\mathbb{Q}x\in \mathbb{P} A(Z_{\cf})\mid x^2=0 \mbox{ and } z\cdot
x=0 \mbox{ for all } z\in V(S)\}.$$

{\it When} dic$(\cf) \leq 2$, {\it we shall say that $S=\emptyset$
is a} [dic$(\cf) -2$]-{\it set of independent algebraic solutions of
$\cf$} and $\calr_{\cf}(\emptyset)$ is defined as above,
$V(\emptyset)$ being the set of classes $$\left\{
[K_{\tilde{\cf}}-K_{Z_{\cf}}] \right\}\cup \left\{ [\tilde{E}_p]\mid
p\in \cb_{\cf} \mbox{ and } {E}_p \mbox{ is non-dicritical}
\right\}.$$

Assume again that $\cf$ has a (primitive) rational first integral $f$, let $D$ be a general element of $\cp_{\cf}$ and suppose that
$\pi_\cf$ is the composition of a sequence as in (\ref{seq}). Notice that $[\tilde{D}]=[D_{\tilde{f}}]$, where $D_{\tilde{f}}$ is as in the beginning of this section. Set
$$T_{\cf}:=s_0L^*-\sum_{i=1}^n s_i E_{p_i}^*$$ such that
$[T_{\cf}]$ is the primitive lattice representative of
$\mathbb{Q}[\tilde{D}]$; Clearly, this implies that
$[\tilde{D}]=\gamma [T_{\cf}]$ for some positive integer $\gamma$.
Then, we can state the following

\begin{lem}\label{lema1}
Let $\cf$ be a foliation on $\gp^2$ having a rational first integral such that
it admits a $[${\rm dic}$(\cf) -2]$-set of independent algebraic
solutions $S$. Then $\mathbb{Q}[T_{\cf}]\in \calr_{\cf}(S)$ and the
cardinality of $\calr_{\cf}(S)$ is either $1$ or $2$.

\end{lem}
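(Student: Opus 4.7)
The plan is to establish the two conclusions separately: the membership $\mathbb{Q}[T_\cf] \in \calr_\cf(S)$ by direct intersection-number computations using Proposition \ref{proposition1} and equation (\ref{equality}), and the cardinality bound by a dimension count combined with the Hodge index theorem applied to the intersection form on $A(Z_\cf)$.

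For the first assertion, since $[\tilde{D}] = \gamma [T_\cf]$ with $\gamma > 0$ and $[\tilde{D}] = [D_{\tilde{f}}]$, it suffices to verify $D_{\tilde{f}}^2 = 0$ and $D_{\tilde{f}} \cdot z = 0$ for every $z \in V(S)$. The self-intersection vanishes because $D_{\tilde{f}}$ is a fiber of the morphism $\tilde{f}\colon Z_\cf \to \gp^1$. For $z = [\tilde{C}_i]$ with $C_i \in S$, invariance of $C_i$ by $\cf$ passes to invariance of $\tilde{C}_i$ by $\tilde{\cf}$, so Proposition \ref{proposition1}(b) yields the vanishing; the same argument handles $[\tilde{E}_p]$ when $E_p$ is non-dicritical, since then $\tilde{E}_p$ is $\tilde{\cf}$-invariant by definition. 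Finally, equation (\ref{equality}) rewrites $[K_{\tilde{\cf}} - K_{Z_\cf}]$ as $2[D_{\tilde{f}}] - \sum_i (n_i - 1)[G_i]$, and each $G_i$ lies in a fiber of $\tilde{f}$, so $D_{\tilde{f}} \cdot (K_{\tilde{\cf}} - K_{Z_\cf}) = 0$.

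For the second assertion, set $n = |\cb_\cf|$ and $s = \mathrm{dic}(\cf)$. A direct count gives $|V(S)| = n - 1$ when $s \geq 2$ and $|V(S)| = n$ when $s = 1$; using the $\gq$-linear independence of $V(S)$ (hypothesis when $s \geq 3$, and verifiable from the basis $\mathbf{B}$ when $s \leq 2$) and the non-degeneracy of the intersection form on the $(n+1)$-dimensional space $A(Z_\cf)$, we obtain $\dim_\gq V(S)^\perp \in \{1, 2\}$. If the dimension is $1$, then $V(S)^\perp = \gq[T_\cf]$ forces $\calr_\cf(S) = \{\mathbb{Q}[T_\cf]\}$. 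The main obstacle is the two-dimensional case, where the key ingredient is the Hodge index theorem: the intersection form on $A(Z_\cf)$ has signature $(1, n)$, so no $\gq$-subspace of dimension at least two is totally isotropic. Hence the restricted form on the plane $V(S)^\perp$ has positive rank; as a symmetric bilinear form on a $\gq$-plane admitting an isotropic line (namely $\gq[T_\cf]$), it has either rank one, with exactly one isotropic line (its radical), or rank two, in which case the existence of an isotropic vector makes it a hyperbolic plane and so it has exactly two isotropic lines. In either subcase $|\calr_\cf(S)| \leq 2$, which together with the first assertion yields $|\calr_\cf(S)| \in \{1, 2\}$.
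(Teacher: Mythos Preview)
Your proof is correct. The first assertion is handled exactly as in the paper, via Proposition~\ref{proposition1}(b) and the relation~(\ref{equality}). For the cardinality bound, the paper takes a projective-geometric route: it identifies $\calr_\cf(S)$ with the intersection of the quadric $\mathcal{Q}\colon X_0^2-\sum X_i^2=0$ and the projective subspace $\mathbb{P}\langle V(S)\rangle^\perp$, then uses polarity with respect to $\mathcal{Q}$ to argue that the points of $\calr_\cf(S)$ correspond to tangent hyperplanes through $\mathbb{P}\langle V(S)\rangle$, hence to points of intersection of the polar line $W$ with $\mathcal{Q}$, of which there are at most two. Your approach bypasses the polarity machinery and works directly with the bilinear form on the plane $V(S)^\perp$: the Hodge index theorem (signature $(1,n)$) rules out a totally isotropic plane, and then elementary classification of symmetric forms on a $2$-plane with a known isotropic line gives at most two isotropic lines. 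The two arguments are equivalent at the core---the polar variety $W$ in the paper \emph{is} $\mathbb{P}(V(S)^\perp)$, and ``a line meets a smooth quadric in at most two points'' is the geometric avatar of your form-theoretic count---but your version is more self-contained and, by invoking Hodge index explicitly, it closes the case $W\subset\mathcal{Q}$ that the paper's Bezout-style sentence leaves implicit. Your claim that $V(\emptyset)$ is $\gq$-independent when $s\le 2$ is indeed straightforward from~(\ref{equality2}) (nonzero $[L^*]$-coefficient) and the unitriangularity of the change of basis between the $[E_p^*]$ and the $[\tilde E_p]$.
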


\begin{proof}

The point $\mathbb{Q} [T_{\cf}]\in \mathbb{P}A_{Z_{\cf}}$ belongs to $\calr_{\cf}(S)$ as a consequence of the equivalence (\ref{equality}) and
clause (b) of Proposition \ref{proposition1}. Let us prove the second assertion.

 Set
$\cb_{\cf}=\{p_1,p_2,\ldots,p_n\}$ and take projective coordinates
$(X_0:X_1:\cdots:X_n)$ of $\mathbb{P} A(Z_{\cf})$ with respect to
the basis $\mathbf{B}$ of $A_{Z_{\cf}}$. Notice that
$$\calr_{\cf}(S)=\mathcal{Q} \cap \mathbb{P}\langle V(S)\rangle
^\bot,$$ where $\mathcal{Q}$ is the quadric defined by the equation
$X_0^2-\sum_{i=1}^n X_i^2=0$ and $\mathbb{P}\langle V(S)\rangle
^\bot$ the projective subspace of $\mathbb{P} A(Z_{\cf})$ associated
with the orthogonal subspace (with respect to the bilinear form
$\cdot$ defined at the beginning of this section) of the linear
subspace $\langle V(S)\rangle$ of $A(Z_{\cf})$ spanned by $V(S)$.

Let $\mathbb{Q} q$ be a point in $\mathcal{Q} \cap \mathbb{P}\langle V(S)\rangle ^\bot $. The
polar hyperplane $H_q:=\{\mathbb{Q} x \in \mathbb{P}A(Z_{\cf})\mid q\cdot x=0 \}$ of the point $\mathbb{Q} q$ with respect to the quadric $\mathcal{Q}$ is defined
by the equation $$Q_0X_0-\sum_{i=1}^n Q_iX_i=0,$$
where $(Q_0:Q_1:\cdots:Q_n)$ are projective coordinates of $\mathbb{Q} q$.
On the one hand, the hyperplane $H_q$ is tangent to the quadric since $\mathbb{Q} q\in {\mathcal Q}$ and, on the other hand, it
contains $\mathbb{P}\langle V(S)\rangle$ because $(L^*)^2=1$,
$L^*\cdot E_{p_i}^*=0$, $(E_{p_i}^*)^2=-1$ for all $i$ and
$E_{p_i}^*\cdot E_{p_j}^*=0$ if $i\not=j$, and because $Q_0[L^*]-\sum_{i=1}^n
Q_i[E_{p_i}^*]$ is orthogonal to $V(S)$.

Let $t \leq n$ be a nonnegative integer. Denote by
$Gr_t(\mathbb{P}(A(Z_{\cf})))$ the set of $t$-dimensional projective
linear subspaces of $\mathbb{P}A(Z_{\cf})$, that is, the
$t$-Grassmannian variety of  $\gp A(Z_{\cf})$. The polarity map
(with respect to the quadric $\mathcal Q$) $\mathbb{Q}p \mapsto H_p$
extends to polarity maps on the Grassmannians
$$G_t: Gr_t(\mathbb{P}A(Z_{\cf}))\rightarrow Gr_{n-1-t}(\mathbb{P}A(Z_{\cf})).$$

Assume that dic$(\cf) = s>1$. Then the dimension of
$\mathbb{P}\langle V(S)\rangle$ is $n-2$ and hyperplanes $H_q$ as
above (tangent to $\mathcal{Q}$ and containing $\mathbb{P}\langle
V(S)\rangle$) are the tangent hyperplanes at the points of
intersection between the quadric and the polar variety
$W:=G_{n-2}(\mathbb{P}\langle V(S)\rangle)$. $W$ is a projective
line  because it belongs to $Gr_{n-1-(n-2)}(\mathbb{P}A(Z_{\cf}))$
and, therefore, the number of intersection points of $
\mathbb{P}\langle V(S)\rangle ^\bot$ with the quadric $\mathcal{Q}$
must be less than or equal to 2. Since $H_q$ determines
$\mathbb{Q}q$ there are, at most, 2 possibilities for such points
$\mathbb{Q}q$.

Otherwise $s=1$ and then $\mathbb{P}\langle V(\emptyset)\rangle$ has
dimension $n-1$, so its polar variety $$W:=G_{n-1}(\mathbb{P}\langle
V(\emptyset)\rangle)$$ is a point $\mathbb{Q} q$  of the quadric
(which corresponds to $\gp \langle V(\emptyset)\rangle^\bot$) and
therefore $H_q=\mathbb{P}\langle V(\emptyset)\rangle$.
\end{proof}

\begin{rem}
\label{nota2} {\rm The above proof shows how to compute
$\calr_{\cf}(S)$ (in terms of the basis $\mathbf{B}$). In the case
when $\mathrm{dic}(\cf)\leq 2$, one can perform it only using data
obtained from the resolution of singularities of $\cf$. Indeed, with
these data one is able to compute the class
$[K_{\tilde{\cf}}-K_{Z_{\cf}}]$ (see  formula (\ref{equality2}))
and,  for each $p\in \cb_{\cf}$, one has
$[\tilde{E}_p]=[E^*_p]-\sum_q [E^*_q]$, where $q$ runs over the set
of points of $\cb_{\cf}$ which are proximate to $p$. When
$\mathrm{dic}(\cf)> 2$ one also needs the coordinates in the basis
$\mathbf{B}$ of the set of classes of strict transforms on $Z_{\cf}$
of the invariant by $\cf$ curves in $S$,
$\{[\tilde{C}_i]\}_{i=1}^{\mathrm{dic}(\cf) - 2}$.


}
\end{rem}

\begin{rem}\label{trajesparatodos}
{\rm If we do not assume that a foliation $\cf$ has a rational first
integral, then it also holds that the cardinality of the set
$\calr_{\cf}(S)$ is less than or equal to 2; moreover, in this case,
$\calr_{\cf}(S)$ may be empty. In addition, if
$\mathrm{dic}(\cf)=1$, $\calr_{\cf}(S)$ is either empty or its
unique element is  $\gp \langle V(\emptyset) \rangle^\bot$. These
facts are straightforward from the proof of Lemma \ref{lema1}.

 }
\end{rem}

The following algorithm is also a {\it proof of clause b) of
Theorems \ref{teor1} and \ref{teor2}.} It can be applied to
foliations $\cf$ admitting a [dic$(\cf) -2$]-set of independent
algebraic solutions and it decides about existence of a rational
first integral of $\cf$ of a prefixed genus $g\not=1$ (computing it
in the affirmative case).
The algorithm works because Proposition
\ref{proposition1} and Lemma \ref{lema1} hold and, when $\cf$ admits a
rational first integral, the divisor $D_{\tilde{f}}$ in Proposition
\ref{proposition1} must satisfy the Adjunction formula.\\

\begin{alg}\label{alg1}
$\;$   \newline \newline \noindent {\it Input:} {\rm A projective
differential 1-form $\mathbf{\Omega}$ defining $\cf$, a non-negative integer
$g\not=1$, the configuration $\mathcal{B}_{\cf}$ and  a
[dic$(\cf) -2$]-set $S=\{C_1,C_2,\ldots,C_{s-2}\}$ of independent algebraic solutions of $\cf$.\\

\noindent {\it Output:} Either a primitive rational first integral
of $\cf$ of genus $g$ or ``0'' (which means that such a first integral does not exist).\\

\begin{itemize}

\item[1.] If dic$(\cf) \leq 2$, define $S:=\emptyset$.

\item[2.] Compute the set $\calr_{\cf}(S)$. If
$\calr_{\cf}(S)=\emptyset$ then return ``0''. Else, let ${\mathcal
L}:=\calr_{\cf}(S)$.

\item[3.] While ${\mathcal L}\not= \emptyset$:

\begin{itemize}

\item[3.1.] Choose $\ell=\mathbb{Q}q\in {\mathcal L}$ and let ${\mathcal L}:={\mathcal L}\setminus \{\ell\}$.

\item[3.2.] Compute coordinates in the basis $\mathbf{B}$, $(d,-m_1,-m_2,\ldots,-m_n)$, of the primitive lattice representative  of $\ell$. If  $m_i<0$ for some $i$, then
go to Step 3.

\item[3.3.] Compute $$\alpha:= \frac{2(g-1)}{-3d + \sum_{i=1}^n m_i}.$$
If $\alpha$ is not  a positive integer then go to Step
3.

\item[3.4.] Compute the space of global sections $$H^0(\gp^2,{\pi_{\cf}}_* \co_{Z_{\cf}}(\alpha T))\subseteq H^0(\gp^2,\co_{\gp^2}(\alpha d)),$$
where $T:=dL^*-\sum_{i=1}^n m_iE_i^*$.

\item[3.5.] If the dimension of the above space is not 2, then go
to Step 3. Else, choose two homogeneous polynomials $F$ and $G$ of
degree $\alpha d$ generating that space.

\item[3.6.] If $\mathbf{\Omega} \wedge (GdF-FdG)=0$ then the rational map
$\gp^2\cdots \rightarrow \gp^1$ whose components are $F$ and $G$ is
a primitive rational first integral of $\cf$; return it. Else, go to
step 3.

\end{itemize}

\item[4.] Return ``0''. $\Box$

\end{itemize}

}
\end{alg}

\begin{rem}
{\rm
The points of the configuration $\mathcal{B}_{\cf}$ are used in the last steps (from 3.4 to 3.6) of the previous algorithm because, there, it is required  to compute and use global sections of sheaves on $\gp^2$ obtained by pushing forward invertible sheaves on $Z_{\cf}$. To perform the remaining steps the algorithm only requires the following data: the degree of $\cf$, the genus $g$ of a general invariant curve, the proximity relations among the points of the configuration ${\mathcal B}_{\cf}$, the above defined numbers $\nu_p(\cf)$ and $\epsilon_p(\cf)$ for each point $p\in {\mathcal B}_{\cf}$ and (only when dic$(\cf)\geq 3$) the degrees of the curves in $S$ and their multiplicities at the points of ${\mathcal B}_{\cf}$.

}
\end{rem}

To end this section  {\it we shall  prove clause a) of Theorems
\ref{teor1} and \ref{teor2}}. In both cases, this clause is an easy consequence of Algorithm \ref{alg1}. Indeed, if  $G$ is a bound on the genus of the rational first integral of a
foliation $\cf$ (assuming that it is different from $1$), then the
degree of the first integral can be bounded by the maximum of the
numbers $2\left(G-1\right)d/\left(\sum_{i=1}^n m_i-3d\right)$
corresponding to coordinates of primitive lattice representatives
$(d,-m_1,-m_2,\ldots,-m_n)$ of the elements in ${\mathcal R}_{\cf}(S)$
determined by a [dic$(\cf) -2$]-set $S$ of algebraic solutions (notice that, by Lemma \ref{lema1}, there are, at most, two possibilities for these elements). To compute this bound, one needs the following data: the bound $G$, the degree of $\cf$,  the degrees of the curves in $S$ and their multiplicities at the points of ${\mathcal B}_{\cf}$ (only when dic$(\cf)\geq 3$), the proximity relations among the points of the configuration ${\mathcal B}_{\cf}$ and the above defined numbers $\nu_p(\cf)$ and $\epsilon_p(\cf)$ for each point $p\in {\mathcal B}_{\cf}$. Since the last two data only depend on the local analytic type of the dicritical singularities of $\cf$, we conclude clause a) of the mentioned theorems. $\Box$

\section{Foliations with only one dicritical divisor}

\label{sec4}

In this section we shall prove the main result of this paper (Theorem \ref{teor3}), which  for foliations $\cf$ on $\gp^2$ such that dic$(\cf)=1$ solves the Poincaré problem and gives an algorithm to decide algebraic integrability. First we shall show the algorithm (which proves clause b)) and, then, we shall deduce clause a) as a consequence of the results used to justify it.

 For a start, fix a foliation $\cf$ of degree $r$, having a rational first integral and such that dic$(\cf)=1$. To avoid trivialities, we also assume that the cardinality of $\cb_{\cf}$ is greater than 1 (note that otherwise the foliation is defined by a pencil of lines). The following results will allow us state the mentioned algorithm which, as we shall see,  consists, in fact, of two algorithms that must be applied consecutively.

\begin{lem}\label{mou}
Let $\cf$ be as above. All the curves in  the irreducible pencil  $\cp_{\cf}$ defined in Section \ref{sec2} are irreducible and, at most two of them, are non-reduced.
\end{lem}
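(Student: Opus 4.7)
The plan is to analyze the fibration $\tilde f\colon Z_\cf\to\gp^1$ exploiting that $\mathrm{dic}(\cf)=1$ forces the configuration $\cb_\cf=\{p_1,\ldots,p_n\}$ to be a chain of infinitely near points terminating at the unique dicritical $p_n$. The non-dicritical divisors $\tilde E_{p_1},\ldots,\tilde E_{p_{n-1}}$ are all invariant and pairwise meet in a connected chain; since $\tilde f$ is constant on each and agrees at their intersections, it takes a single value $t^*\in\gp^1$ on their union. Hence every non-dicritical exceptional divisor lies in the single special fiber $\tilde f^*(t^*)$, and for every $t\neq t^*$ one has $\tilde f^*(t)=\tilde D_t$, the bare strict transform of the pencil element $D_t$.

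For such $t\neq t^*$ the curves $\tilde D_t$ and $\tilde E_{p_i}$ (for $i<n$) lie in disjoint fibers of $\tilde f$, so $\tilde D_t\cdot\tilde E_{p_i}=0$. Writing $\tilde D_t=\sum_j a_j \tilde C_j$ with $\tilde C_j$ the distinct integral components and noting each $\tilde C_j\cdot\tilde E_{p_i}\geq 0$, this yields $\tilde C_j\cdot\tilde E_{p_i}=0$ for all $j$ and all $i<n$. Combined with $\tilde C_j\cdot T_\cf=0$ (invariance, Proposition \ref{proposition1}(b)) and the proximity formula read along the chain, the class $[\tilde C_j]$ is forced to be proportional to $[T_\cf]$, yielding $[\tilde C_j]=\alpha_j [T_\cf]$ for some positive integer $\alpha_j$, and in particular $\tilde C_j^2=0$. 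Two distinct components then satisfy $\tilde C_j\cdot\tilde C_k=\alpha_j\alpha_k T_\cf^2=0$, so they are disjoint on $Z_\cf$; since $\tilde D_t$ has no exceptional glue, disjointness contradicts the connectedness of fibers of $\tilde f$, and $\tilde D_t$ has a single integral component, so $D_t$ is irreducible.

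For $t=t^*$ the horizontal part $\tilde D_{t^*}$ can a priori have several integral components linked through the vertical chain. Here I would write $\tilde f^*(t^*)=\tilde D_{t^*}+\sum_{j<n}c_j\tilde E_{p_j}$, determine the $c_j$'s from the tridiagonal linear system given by $\tilde f^*(t^*)\cdot\tilde E_{p_i}=0$ along the chain, then apply Zariski's lemma to the full fiber together with $T_\cf$-orthogonality of the horizontal components to conclude that $\tilde D_{t^*}$ still has a unique integral component, possibly appearing with multiplicity (giving the non-reduced case). Finally the bound of at most two non-reduced elements is obtained by applying the canonical bundle formula (\ref{equality}) to $\tilde f$ together with (\ref{equality2}) and an Euler-characteristic computation on the rational surface $Z_\cf$: since part (a) gives each fiber the form $n_t\tilde C_t+\text{(vertical)}$, the sum $\sum_t(n_t-1)$ appearing in the canonical formula is numerically constrained so that at most two $t$'s admit $n_t\geq 2$.

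The main obstacle I anticipate is the analysis at $t=t^*$: vertical components can in principle glue several horizontal components into a connected configuration, so the clean disjointness argument of the $t\neq t^*$ case breaks down and one must push Zariski's lemma through with the explicit chain structure. The bound of two non-reduced elements then emerges as a numerical consequence of the canonical bundle formula once part (a) has forced each fiber to have a single integral horizontal component.
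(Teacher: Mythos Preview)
Your approach is entirely different from the paper's. The paper simply invokes two external results: Kaliman's inequality $1+\sum_R(e_R-1)\le\mathrm{dic}(\cf)$ (which with $\mathrm{dic}(\cf)=1$ forces every pencil member to have a single integral component), and a classical observation of Poincar\'e for the bound of two non-reduced members. There is no fiber-by-fiber analysis at all. Your route, by contrast, tries to extract both statements directly from the geometry of the fibration $\tilde f$, which would be more self-contained if it went through.

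There is a slip in your set-up: while the configuration $\cb_\cf$ is indeed totally ordered under~$\geq$, the strict transforms $\tilde E_{p_1},\dots,\tilde E_{p_{n-1}}$ need \emph{not} form a connected chain on $Z_\cf$. The dual graph of \emph{all} the $\tilde E_{p_i}$ is a tree, but $\tilde E_{p_n}$ can be an interior vertex (this happens precisely when $p_n$ is a satellite, proximate to two earlier points), so removing it may split the non-dicritical divisors into two connected pieces. You then get up to two special values $t^*_1,t^*_2$ rather than one. This is easily absorbed into your argument for $t\ne t^*$, which is otherwise sound; the special-fiber case you flag as an obstacle is genuinely delicate and your Zariski-lemma sketch would need to be carried out in full.

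The more serious gap is in the ``at most two non-reduced'' part. Intersecting the canonical formula (\ref{equality}) with $[L^*]$ and using that each pencil member has the form $n_tC_t$ gives $\sum_t(1-1/n_t)=2-(r+2)/\delta<2$, and this numerical constraint allows \emph{three} multiple members (e.g.\ multiplicities $(2,2,2)$ or $(2,3,5)$), not two. So the canonical bundle formula alone does not deliver the bound; one needs an additional input---essentially Poincar\'e's argument that an \emph{irreducible} pencil of plane curves cannot have three members that are proper powers, or an equivalent monodromy/simple-connectedness argument. Without that, your sketch stops one short of the claim.
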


\begin{proof}
From \cite[Corollary 2]{kaliman} and the subsequent remark, it can be deduced that the cardinality of the set of dicritical exceptional divisors dic$(\cf)$ attached to a foliation $\cf$ satisfies the following inequality
$$
1 +  \sum (e_R-1) \leq \mathrm{dic}(\cf),
$$
where the sum is taken over the set of curves $R$ in the pencil
$\cp_{\cf}$ and $e_R$ stands for the number of different integral
components of $R$. As a consequence any curve in ${\mathcal P}_{\cf}$ is irreducible because dic$(\cf)=1$.
The second part of the statement follows from a result of Poincar\'{e} in \cite[page 187 of I]{poi2}.

\end{proof}

\begin{pro}\label{mou2}

Let $\cf$ and ${\mathcal P}_{\cf}$ be as in Lemma \ref{mou}. Let $\mathcal A$ be the set of integral components of the non-reduced curves in ${\mathcal P}_{\cf}$. Then $\deg(A)<\deg(\cf)+2$ for all $A\in {\mathcal A}$. Moreover, if $\mathcal A$ has two elements (say $A_1$ and $A_2$) then $\deg(A_1)+\deg(A_2)=\deg(\cf)+2$.

\end{pro}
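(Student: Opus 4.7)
The plan is to combine the two expressions for the class $[K_{\tilde{\cf}} - K_{Z_{\cf}}]$ given in linear equivalences (\ref{equality}) and (\ref{equality2}), then intersect with the class $[L^*]$ to obtain a single numerical identity that encodes the statement. First I would equate the right-hand sides of those equivalences and take the intersection number with $L^*$. Using the relations $(L^*)^2 = 1$, $L^* \cdot E_p^* = 0$, and $D_{\tilde{f}} \cdot L^* = d$ (where $d$ is the degree of the first integral; this follows from the projection formula applied to $\pi_{\cf}$), the right-hand side reduces to $r+2$, while the left-hand side becomes $2d - \sum_{i \in I}(n_i - 1)(G_i \cdot L^*)$.

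The second step is to determine which terms of the sum are nonzero. Recall from Section \ref{sec3} that each integral component $G_i$ of a singular fiber of $\tilde{f}$ is either the strict transform of a non-dicritical exceptional divisor (for which $G_i \cdot L^* = 0$) or the strict transform $\tilde{A}$ of an integral invariant curve $A \subseteq \gp^2$ (for which $G_i \cdot L^* = \deg(A)$ by the projection formula). Under the hypothesis $\mathrm{dic}(\cf)=1$, Lemma \ref{mou} ensures that every element of $\cp_{\cf}$ is irreducible, so the only $G_i = \tilde{A}$ satisfying $n_i \neq 1$ arise from the non-reduced elements $R = kA \in \cp_{\cf}$, of which there are $s \leq 2$. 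For such an $R$, the multiplicity $n_i$ of $\tilde{A}$ in the corresponding fiber of $\tilde{f}$ equals $k$, because the fixed part of $\pi_{\cf}^* \cp_{\cf}$ is supported on exceptional divisors. Since every element of $\cp_{\cf}$ has the same degree $d$, one has $k\deg(A) = d$, so each such $\tilde{A}$ contributes $(k-1)\deg(A) = d - \deg(A)$ to the sum.

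Writing $\mathcal{A} = \{A_1, \ldots, A_s\}$ with $a_j := \deg(A_j)$ and $s \in \{0,1,2\}$, the identity then reduces to $(2-s)d + \sum_{j=1}^s a_j = r+2$. The case $s=2$ yields $a_1 + a_2 = r+2$ directly, which is the second assertion. The bound $a_j < r+2$ follows in both nontrivial cases: for $s = 1$, $a_1 = r + 2 - d \leq r + 1$ since $d \geq 1$; for $s = 2$, $a_j = r + 2 - a_{3-j} \leq r + 1$ since $a_{3-j} \geq 1$. The main technical point will be the identification of the multiplicity of $\tilde{A}$ in the fiber of $\tilde{f}$ corresponding to $R = kA$; once that is settled, the rest is a short numerical deduction.
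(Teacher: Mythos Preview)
Your argument is correct and follows essentially the same route as the paper: intersect the linear equivalence (\ref{equality}) with $[L^*]$, use (\ref{equality2}) to identify the left-hand side with $r+2$, and observe that only the strict transforms of the non-reduced members of $\cp_{\cf}$ contribute to the sum, yielding the identity $(2-s)d+\sum_j a_j=r+2$. The paper records this identity as $2\delta-r-2=\theta\delta-\chi$ and draws the same conclusions; your version is slightly more explicit in justifying why the fiber multiplicity of $\tilde{A}$ equals the exponent $k$ in $R=kA$, a point the paper leaves implicit.
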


\begin{proof}

Let $\delta$ be the degree of a primitive rational first integral of
$\cf$ and let $r=\deg(\cf)$. Set $\theta$ the number of non-reduced curves in the pencil
$\cp_{\cf}$ and  $\chi$ the sum of the degrees of the integral
components of these curves. Notice that $\theta\leq 2$ by
Lemma \ref{mou}.

Taking, in $(\ref{equality})$,
intersection products with the total transform of a general line of
$\gp^2$, one has that
\begin{equation}\label{prrr}
2 \delta -r-2=\sum (e_R-1)\deg(R),
\end{equation}
where the sum is taken over the set of integral components $R$ of
the curves in $\cp_{\cf}$ and $e_R$ denotes the multiplicity of $R$
as a component of such curves. Therefore
$$2 \delta-r-2=\theta  \delta - \chi.$$

This concludes the proof of the first assertion because $\theta=1$
implies $\delta < r +2$ and $\theta=2$ shows  $\chi=r+2$, and in
both cases $r+2$ is a strict upper bound of the of the degrees of
the mentioned integral components. The last assertion holds because $\theta=2$ and it is the equality $\chi = r + 2$.

\end{proof}

Next we shall define, for \emph{an arbitrary foliation} $\cf$ (which needs not to have a rational first integral), a set of divisors on $Z_{\cf}$ that will be useful to state our last result before giving the algorithms
that will prove Theorem \ref{teor3}. Let $x$ be a
positive integer and denote by $\Gamma(x)$ the (finite)
set of divisors $C=xL^*-\sum_{p\in {\mathcal
B}_{\cf}} y_p E_p^*$ satisfying the following
conditions:

\begin{itemize}

\item[(a)]   $0 \leq y_p\leq x$ for all $p\in {\mathcal
B}_{\cf}$.

\item[(b)] $C\cdot \tilde{E}_p\geq 0$ for all $p\in {\mathcal
B}_{\cf}$.

\item[(c)] Either $C^2=K_{Z_{\cf}}\cdot C=-1$, or $C^2\leq 0$,
$K_{Z_{\cf}}\cdot C\geq 0$ and $C^2+K_{Z_{\cf}}\cdot C\geq -2$.

\item[(d)] The complete linear system $|C|$ has (projective)
dimension $0$.

\end{itemize}

\begin{lem}\label{zzzz}
Let $\cf$ be as in Lemma \ref{mou}. Any integral component of a non-reduced curve in $\cp_\cf$ is the push-forward ${\pi_{\cf}}_*|C|$ for some divisor  $C$ on $Z_{\cf}$ which belongs to $\bigcup_{x<r+2}\Gamma(x)$.

\end{lem}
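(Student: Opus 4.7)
The plan is to take $C:=\tilde A$, the strict transform of $A$ on $Z_\cf$, and to verify $C\in\Gamma(x)$ for $x:=\deg(A)$. Writing $C=xL^*-\sum_{p\in\cb_\cf}m_p E_p^*$, where $m_p$ is the multiplicity of the strict transform of $A$ at $p$, Lemma \ref{mou} forces the non-reduced curve of which $A$ is a component to be irreducible, hence of the form $eA$ with $e\geq 2$, while Proposition \ref{mou2} gives $x<r+2$. The push-forward identity $\pi_{\cf*}|C|=\{A\}$ is automatic once (d) is established, so the whole task reduces to verifying (a)--(d).

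Conditions (a) and (b) should be routine: the multiplicity of a plane curve of degree $x$ at any (possibly infinitely near) point is bounded by $x$, and $\tilde A$ and $\tilde E_p$ are distinct integral curves on $Z_\cf$, hence meet non-negatively. For (c), the invariance of $A$ makes $\tilde A$ invariant by $\tilde\cf$, giving $\tilde A^2\leq 0$ via Proposition \ref{proposition1}(c); combining adjunction with $p_a(\tilde A)\geq 0$ yields $\tilde A^2+K_{Z_\cf}\cdot\tilde A\geq -2$. A case split on $\tilde A^2$ then recovers the two disjuncts of (c) in every subcase \emph{except} the borderline one $\tilde A^2=0$, $p_a(\tilde A)=0$, which would force $K_{Z_\cf}\cdot\tilde A=-2$ and violate (c).

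The hard part, and the same step that handles (d), is ruling out this borderline case and, more broadly, showing $h^0(Z_\cf,C)=1$. My plan is to inspect the scheme-theoretic fiber $F_{[A]}=\tilde f^{-1}([A])=e\tilde A+N$, where $N$ is a non-negative combination of strict transforms of non-dicritical exceptional divisors. Proposition \ref{proposition1}(b) gives $F_{[A]}\cdot\tilde A=0$, so $N\cdot\tilde A=0$; connectedness of the fibers of $\tilde f$ then forces $N=0$, so $e\tilde A\sim D_{\tilde f}$ and $[\tilde A]=\tfrac{1}{e}[D_{\tilde f}]$. If $h^0(\tilde A)\geq 2$ (which Riemann--Roch together with the trivial vanishing $h^0(K_{Z_\cf}-\tilde A)=0$ would supply in the borderline case, since the class $K_{Z_\cf}-\tilde A$ has negative push-forward to $\gp^2$), then a pencil inside $|\tilde A|$ produces a rational first integral of $\cf$ which, by primitivity of $f$, must factor as $\psi\circ\tilde f$ for some $\psi\colon\gp^1\to\gp^1$ of integer degree $k\geq 1$; comparing fiber classes against $[\tilde A]=\tfrac{1}{e}[D_{\tilde f}]$ forces $k=1/e<1$, contradicting $e\geq 2$. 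When instead $\tilde A^2<0$, (d) is immediate: any second effective divisor in $|\tilde A|$ would meet $\tilde A$ negatively, hence contain $\tilde A$ as a component, which is incompatible with linear equivalence.
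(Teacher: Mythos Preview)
Your proof is correct and follows the paper's plan: take $C=\tilde A$ and verify conditions (a)--(d), with $x<r+2$ coming from Proposition~\ref{mou2}. The treatments of (a) and (b) coincide. The two proofs differ on (c) and (d).

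For (d) the paper's route is shorter and avoids your case split. Any $C'\in|C|$ has $C'\cdot D_{\tilde f}=\tilde A\cdot D_{\tilde f}=0$, so every component of $C'$ lies in a fiber of $\tilde f$; hence the integral components of $\pi_{\cf*}C'$ are integral components of curves in $\cp_\cf$. By Lemma~\ref{mou} there are only finitely many such components, so $|C|$ is finite and has projective dimension~$0$. This replaces your fiber/primitivity argument for the case $\tilde A^2=0$. Conversely, you are right to flag the borderline case $\tilde A^2=0$, $K_{Z_\cf}\cdot\tilde A=-2$ for (c): the paper's one-line appeal to Proposition~\ref{proposition1}(c) and adjunction does not cover it on its own. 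Your argument disposes of it correctly; note that once (d) is established (by either method) the case is ruled out immediately, since Riemann--Roch would force $h^0(\tilde A)\ge 2$. An even quicker variant of your contradiction: from $e\tilde A\sim D_{\tilde f}$ one gets $h^0(D_{\tilde f})\ge\dim\mathrm{Sym}^e H^0(\tilde A)\ge e+1\ge 3$, contradicting Proposition~\ref{proposition1}(a) without invoking factorization through $\tilde f$. One presentational remark: your step ``$F_{[A]}\cdot\tilde A=0$, so $N\cdot\tilde A=0$'' tacitly uses $\tilde A^2=0$; that is indeed the case under discussion, but it should be said.
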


\begin{proof}

Let $H$ be an integral component of a non-reduced curve of the
pencil $\cp_\cf$. Let $x$ be the degree of $H$ and, for each $p\in
\cb_{\cf}$, denote by $y_p$ the multiplicity at $p$ of the strict
transform of $H$ on the surface to which $p$ belongs. Then, it holds
the following linear equivalence between divisors on $Z_{\cf}$:
$$\tilde{H}\sim C:=xL^*-\sum_{p\in \cb_{\cf}} y_p E_p^*,$$
and it happens that $H={\pi_{\cf}}_*|C|$. Let us see that $C$
belongs to $\Gamma(x)$. Indeed, condition (a) of the definition of
$\Gamma(x)$ is clear, (b) is true because $\tilde{H}$ is irreducible
and non-exceptional, (c) follows from statement (c) in Proposition
\ref{proposition1} and the Adjunction formula and (d) holds because
the integral components of the curves in ${\pi_{\cf}}_*|C|$ are also
integral components of the curves in the pencil $\cp_{\cf}$. This
concludes the proof because $x<r+2$ by Proposition \ref{mou2}.
\end{proof}

Now we shall give two algorithms (Algorithm \ref{alg2} and Algorithm \ref{alg3}) that, successively applied, {\it prove part b) of Theorem \ref{teor3}}.

Firstly we shall describe the ideas supporting Algorithm \ref{alg2}.
Consider an arbitrary foliation $\cf$ on $\gp^2$  such that
dic$(\cf)=1$ and consider the set $\calr_{\cf}(\emptyset)$. By
Remark \ref{trajesparatodos}, this set must have cardinality $\leq
1$. When  $\calr_{\cf}(\emptyset)=\emptyset$ we can ensure that
$\cf$ has no rational first integral. Otherwise, it is
straightforward to obtain the unique candidate $T$ for the divisor
$T_{\cf}$ defined before Lemma \ref{lema1} (see Remark
\ref{trajesparatodos}). If $\cf$ has a rational first integral,
then, by Lemma \ref{mou}, there exist, at most, two integral
components of non-reduced curves in ${\mathcal P}_{\cf}$. Moreover,
by Lemma \ref{zzzz}, the classes in $A(Z_{\cf})$ of their strict
transforms on $Z_{\cf}$ must be in the finite set
$\bigcup_{x<\deg(\cf)+2}\Gamma(x)$   and furthermore by Proposition
\ref{proposition1}, they must be orthogonal to $T$. Taking into
account these considerations, it can be computed a set $\mathcal A$
such that, if $\cf $ had rational first integral, its elements would
be exactly the integral components of the non-reduced curves in
${\mathcal P}_{\cf}$. The precise algorithm is the following one:

\begin{alg}\label{alg2}

$\;$  \newline \newline \noindent {\it Input:} {\rm A projective differential 1-form
$\mathbf{\Omega}$ defining a foliation $\cf$ on $\gp^2$ of degree $r$ such that dic$(\cf)=1$ and the configuration $\cb_{\cf}$.\\

\noindent {\it Output:} Either a pair $(T, {\mathcal A})$, where $T$
is a candidate for the divisor $T_{\cf}$ and ${\mathcal A}$
is a candidate set for the set of integral components of the
non-reduced curves of $\cp_{\cf}$ (in case $\cf$ being algebraically integrable), or ``0'' (which implies that $\cf$ is
not algebraically integrable).\\

\begin{itemize}

\item[1.] Compute the set $\calr_{\cf}(\emptyset)$. If
$\calr_{\cf}(\emptyset)=\emptyset$ then return ``0''.

\item[2.] Take the unique element $\ell\in \calr_{\cf}(\emptyset)$.

\item[3.] Set $(d,-m_1,-m_2,\ldots,-m_n)$ coordinates in the basis $\mathbf{B}$ of the primitive lattice representative of $\ell$. If $m_i<0$ for some $i$, then return ``0''. Else, let $T:=dL^*-\sum_{i=1}^n m_iE_i^*$.


\item[4.] Seek $C\in \bigcup_{x<\deg(\cf)+2}\Gamma(x)$
such that $T\cdot C=0$ and ${\pi_{\cf}}_*|C|$ is an invariant curve.
Perform this in the following manner: order the divisors in
$[T]^\bot \cap \left(\bigcup_{x<\deg(\cf)+2}\Gamma(x)\right)$  in a
list $C_1,C_2,\ldots$ satisfying  the  following implication:
$$i<j\Rightarrow C_i\in \Gamma(x_i) \mbox{ and } C_j\in \Gamma(x_j)
\mbox{ with } x_i\leq x_j;$$ then, set $C:=C_{i_0}$, $i_0$ being
the minimum index $i$ such that ${\pi_{\cf}}_*|C_i|$ is an invariant
by $\cf$ curve.

\item[5.] If such a divisor does not exist, then return $(T,\emptyset)$. Else,

\begin{itemize}

\item[5.1.] Set $x$ such that $C\in \Gamma(x)$.

\item[5.2.] If $x>\frac{r+2}{2}$, then return $(T,\{C\})$. Else,

\begin{itemize}

\item[5.2.1.] Find  $W\in [T]^\bot \cap \Gamma(r+2-x)$ such that the curve ${\pi_{\cf}}_*|W|$ is invariant and it has not $C$ as integral component.

\item[5.2.2.] If such a curve $W$ does not exist, then return $(T,\{C\})$. Else return $(T,\{C,W\})$.

\end{itemize}

\end{itemize}

\end{itemize}

}
\end{alg}

Finally, we are going to give Algorithm \ref{alg3}, which combined
with Algorithm \ref{alg2} gives rise to our above mentioned
algorithm to decide algebraic integrability. Its inputs will be a
differential 1-form $\mathbf{\Omega}$ defining $\cf$ and the candidate pair
provided by Algorithm \ref{alg2}; the output will be either a
primitive rational first integral for $\cf$ or ``0'' (which means
that $\cf$ is not algebraically integrable). Algorithm \ref{alg3} is
supported on the following result:


\begin{pro}\label{patricio}
\label{U} Let $\cf$ be a foliation on $\gp^2$ such that dic$(\cf)=1$
and it has a rational first integral $f$. Let $\widetilde{D}_f$ be a
general fiber of $\widetilde{f}= f \circ \pi_{\cf}$ and $\gamma$
the positive integer such that $[\widetilde{D}_f] = \gamma
[T_{\cf}]$, $[T_{\cf}]$ being the primitive lattice representative
of $\mathbb{Q}[\widetilde{D}_f]$. Let ${\mathcal A}$ be the set of
integral components of the non-reduced curves in ${\mathcal
P}_{\cf}$. Then, the following statements hold:
\begin{itemize}
\item[(a)] If ${\mathcal A}=\emptyset$, then $\gamma=\frac{r+2}{2s_0}$, $s_0$ being the first coordinate of the class $[T_{\cf}]$ in the basis $\mathbf{B}$.
  \item[(b)] If ${\mathcal A}=\{A_1\}$, then $\gamma=\frac{r+2-\deg(A_1)}{s_0}$.
  \item[(c)] Otherwise ${\mathcal A}=\{A_1,A_2\}$ and then $\gamma=\frac{{\rm lcm}(\deg(A_1),\deg(A_2))}{s_0}$.\end{itemize}
\end{pro}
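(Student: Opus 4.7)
The plan is to leverage formula (\ref{prrr}) from the proof of Proposition \ref{mou2}, which reads $2\delta - r - 2 = \sum_R (e_R-1)\deg(R)$, where $\delta$ is the degree of the primitive rational first integral $f$, and the sum is over integral components $R$ of curves in $\cp_\cf$. Combined with the fact (Lemma \ref{mou}) that every curve in $\cp_\cf$ is irreducible, each non-reduced curve has the form $e_iA_i$ for some $A_i\in\mathcal{A}$ and $e_i\geq 2$. The first reduction in all three cases is the identity $\delta=\gamma s_0$: taking intersection of $[\widetilde{D}_f]=\gamma[T_\cf]=\gamma s_0 L^*-\gamma\sum s_iE_{p_i}^*$ with $L^*$ (equivalently, reading off the first coordinate) yields that the degree of a general curve in $\cp_\cf$ equals $\gamma s_0$.

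For clause (a), if $\mathcal{A}=\emptyset$ then every curve in the pencil is reduced, so all $e_R=1$ and the right-hand side of (\ref{prrr}) vanishes. This gives $\delta=(r+2)/2$, hence $\gamma=(r+2)/(2s_0)$. For clause (b), the unique non-reduced member of $\cp_\cf$ is $e_1A_1$, and since it belongs to the pencil of degree $\delta$ we have $e_1\deg(A_1)=\delta$. Substituting $(e_1-1)\deg(A_1)=\delta-\deg(A_1)$ into (\ref{prrr}) produces $\delta=r+2-\deg(A_1)$ and therefore $\gamma=(r+2-\deg(A_1))/s_0$.

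For clause (c), there are two non-reduced curves $e_1A_1,\;e_2A_2\in\cp_\cf$, so $e_1\deg(A_1)=e_2\deg(A_2)=\delta$. Writing $L:=\mathrm{lcm}(\deg(A_1),\deg(A_2))$, $a:=L/\deg(A_1)$, $b:=L/\deg(A_2)$ (so $\gcd(a,b)=1$) and $\delta=kL$, one obtains $e_1=ka$, $e_2=kb$, hence $\gcd(e_1,e_2)=k$. The key step is to force $k=1$ using primitivity of $f$: after choosing coordinates on the target $\gp^1$ so that the two non-reduced fibers lie over $0$ and $\infty$, one may take $f=A_1^{e_1}/A_2^{e_2}$ up to scalar. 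If $k>1$, then $f=(A_1^{e_1/k}/A_2^{e_2/k})^k$ factors through the $k$-th power map $\gp^1\to\gp^1$; equivalently, the general fiber $A_1^{e_1}-\lambda A_2^{e_2}=\prod_{j=0}^{k-1}(A_1^{e_1/k}-\zeta^j\mu A_2^{e_2/k})$ (with $\mu^k=\lambda$ and $\zeta$ a primitive $k$-th root of unity) is reducible, contradicting Bertini's theorem and the primitivity of $f$ (which ensures the general member of $\cp_\cf$ is integral). Hence $k=1$, $\delta=L$, and $\gamma=\mathrm{lcm}(\deg(A_1),\deg(A_2))/s_0$.

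The main obstacle is the last step of clause (c), namely ruling out $\gcd(e_1,e_2)>1$; the remaining content of the proposition is algebraic bookkeeping around (\ref{prrr}) together with the irreducibility statement of Lemma \ref{mou}.
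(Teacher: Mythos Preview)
Your proof is correct and follows essentially the same route as the paper: parts (a) and (b) are obtained from (\ref{prrr}) together with $\delta=\gamma s_0$ exactly as the authors do, and for part (c) the paper likewise argues that the two non-reduced members $n_1A_1$, $n_2A_2$ span the pencil and that $\gcd(n_1,n_2)=1$ because the pencil is irreducible, then deduces $\delta=\mathrm{lcm}(\deg(A_1),\deg(A_2))$. Your factorization of $A_1^{e_1}-\lambda A_2^{e_2}$ simply spells out the coprimality step that the paper states in one line.
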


\begin{proof}

(a) and (b) are direct consequences of equality (\ref{prrr}). To
prove (c) observe first that, by Lemma \ref{mou}, there exist
positive integers $n_1,n_2$ such that the pencil ${\mathcal
P}_{\cf}$ is spanned by homogeneous polynomials giving equations of
$n_1A_1$ and $n_2A_2$. Moreover $n_1$ and $n_2$ are relatively
primes because the pencil is irreducible. Since
$n_1\deg(A_1)=n_2\deg(A_2)$ we have that
$n_1=\frac{\deg(A_2)}{\gcd(\deg(A_1),\deg(A_2))}$  and, therefore,
the degree of a general integral invariant curve is
$$\frac{\deg(A_1)\deg(A_2)}{\gcd(\deg(A_1),\deg(A_2))}={\rm
lcm}(\deg(A_1),\deg(A_2)).$$

\end{proof}


\begin{alg}\label{alg3}

$\;$ \newline \newline \noindent {\it Input:} {\rm A projective differential 1-form
$\mathbf{\Omega}$ defining a foliation $\cf$ on $\gp^2$ such that dic$(\cf)=1$ and a candidate pair $(T, {\mathcal A})$ given by the output of Algorithm \ref{alg2}.\\

\noindent {\it Output:} Either a rational first integral of $\cf$ or ``0''(which means that $\cf$ has no such a first integral).

\begin{itemize}

\item[1.] Compute $\gamma$ according with the values provided in Proposition \ref{U}  (taking the set $\mathcal A$ and the divisor $T$ instead of $T_{\cf}$). If either $\gamma$ is not an integer or  $\gamma T$ is not a divisor then return ``0''.

\item[2.] Compute a basis of the space of global sections $H^0(\gp^2,\pi_{\cf*}\co_{Z_{\cf}}(\gamma T))$.

\item[3.] If $h^0(\gp^2,\pi_{\cf*}\co_{Z_{\cf}}(\gamma T)) \neq 2$, then return ``0''.

\item[4.] Else, take a basis $\{F,G\}$ and check the equality $$\mathbf{\Omega} \wedge (GdF-FdG)=0.$$ If it is satisfied, then  the rational map
$\gp^2\cdots \rightarrow \gp^1$ whose components are $F$ and $G$ is
a primitive rational first integral of $\cf$; return it. Else, return ``0''.

\end{itemize}

}
\end{alg}

Notice that to run Algorithms \ref{alg2} and \ref{alg3}, one only
performs very simple integer arithmetics and resolution of systems
of linear equations.\\

We finish this section by giving the {\it proof of clause a) of
Theorem \ref{teor3}}. That is, we are going to prove the inequality
$$d\leq \frac{(r+2)^2}{4},$$
where $r$ is the degree of a foliation $\cf$ on $\gp^2$ having a
rational first integral of degree $d$ and such that dic$(\cf)=1$.
Let $\mathcal A$ be as in Proposition \ref{patricio}. If either this
set is empty or its cardinality is $1$, the above inequality is
trivially satisfied by clauses (a) and (b) of Proposition
\ref{patricio}. Therefore, let us take ${\mathcal A}=\{A_1,A_2\}$.
Applying Proposition \ref{mou2} and clause (c) of Proposition
\ref{patricio} one has that $$d={\rm
lcm}(\deg(A_1),r+2-\deg(A_1))\leq \deg(A_1)(r+2-\deg(A_1))\leq
\frac{(r+2)^2}{4},$$ completing the proof of Theorem \ref{teor3}.
$\Box$

\begin{rem}
{\rm Let $\cf$ be a foliation on $\gp^2$ such that dic$(\cf)=1$ and
the coefficients of a  differential 1-form $\mathbf{\Omega}$
providing $\cf$ are integer numbers. Then, since we have a bound on
the degree of the first integral (if it exists), an alternative
algorithm to compute that integral is that described in \cite{cheze}
which relies on the factorization of the extactic curves studied in
\cite{per} (see also \cite{do-lo}). Nevertheless, to check whether
dic$(\cf)=1$, a resolution of the singularities of $\cf$ is needed.}
\end{rem}

\section{Examples}
\label{sec5}

This last section is devoted to provide some examples that show how
our algorithms work. For a start, we shall use Algorithm \ref{alg1}
to get a rational first integral of a foliation of degree 4.

\begin{exa}\label{ex1}
{\rm Consider the singular algebraic foliation $\cf$  given by the
differential 1-form
$$\mathbf{\Omega} = (2X_1 X_2^5)\; dX_0 + (-7X_1^{5} X_2 - 3X_0 X_2^5+X_1 X_2^5) \;dX_1 + (7X_1^{6} + X_0 X_1 X_2^4-X_1^2
X_2^4) \; dX_2.$$

From the minimal resolution of singularities we compute the
configuration of dicritical points $\cb_\cf$. It has 13 points,
$\{p_i\}_{i=1}^{13}$, and its proximity graph is displayed in Figure
1. We recall that the vertices of this graph represent the points in
$\cb_\cf$, and two vertices, $p_i,p_j \in \cb_\cf$, are joined by an
edge if $p_i$ belongs to the strict transform of the exceptional
divisor $E_{p_j}$. This edge is curved-dotted except when $p_i$
belongs to the first infinitesimal neighborhood of $p_j$ (here the
edge is straight-continuous). For simplicity's sake, we delete those
edges which can be deduced from others (for instance, we have
deleted the curved-dotted edges joining $p_7$ and $p_6$ with $p_4$
since there is an edge joining $p_8$ and $p_4$). From the local
differential 1-forms defining the transformed foliations in the
resolution process, it can be easily deduced that the unique
dicritical divisors are $E_{p_3}$ and $E_{p_{13}}$· Therefore
dic$(\cf) =2$. Then we can use Algorithm \ref{alg1} to check whether
$\cf$ has a rational first integral of genus $g=0$ because $g \neq
1$ and $\cf$ admits an empty $[\mathrm{dic}(\cf)-2]$-set of
independent algebraic solutions. From the minimal resolution, we can
obtain the divisor class
$$[K_{\tilde{\cf}}-K_{Z_{\cf}}]=7[L^*] -[E_{p_1}^*] -[E_{p_2}^*]-2[E_{p_3}^*] -5[E_{p_4}^*]-2\sum_{i=5}^8[E_{p_i}^*]-\sum_{i=9}^{12}[E_{p_i}^*]-2[E_{p_{13}}^*]$$
and the set  $\mathcal{R}_\cf (\emptyset)$:
$$\mathcal{R}_\cf (\emptyset)=\{(10:-2:-1:-1:-8:-2:-2:-2:-2:-2:-2:-2:-1:-1),$$
$$(2770: -762: -381: -381: -2152: -538:$$ $$ -538: -538: -538:-538: -538: -538: -269: -269)\},$$
where we have taken projective coordinates with respect to the basis
$\mathbf B$. Following Algorithm \ref{alg1}, we must consider the
first  element in $\mathcal{R}_\cf (\emptyset)$ and compute the
value $\alpha$ in step 3.3. Here $\alpha=1$,
$$T = 10L^*-2E_{p_1}^*-E_{p_2}^*-E_{p_3}^*-8E_{p_4}^*-2\sum_{i=5}^{11}
E_{p_i}^*-E_{p_{12}}^*-E_{p_{13}}^*$$ and the dimension of the
vector space $H^0(\gp^2,\pi_{\cf*}\co_{Z_{\cf}}( T))$ is two, being
$F =X_1^3 X_2^7$ and
$G=X_1^{10}-2X_0X_1^5X_2^4+2X_1^6X_2^4+X_0^2X_2^8-2X_0 X_1
X_2^8+X_1^2 X_2^8$ a basis of this vector space. Finally, $\cf$ has
a rational first integral given by $F$ and $G$ because $\mathbf{\Omega}
\wedge (GdF-FdG)=0$. Notice that this example is \cite[Example
2]{g-m-1}, where we proved the same result with a different
procedure. }
\end{exa}

\begin{figure}[hbt]\label{fig1}
\setlength{\unitlength}{1mm}
\begin{center}
\begin{picture}(40,45)

\put(0,0){\circle*{2}} \put(0,0){\line(0,1){5}} \put(5,0){$p_1$}

\put(0,5){\circle*{2}}\put(0,5){\line(0,1){5}} \put(5,5){$p_2$}

\put(0,10){\circle*{2}} \put(5,10){$p_3$}

\put(30,0){\circle*{2}} \put(30,0){\line(0,1){5}} \put(35,0){$p_4$}

\put(30,5){\circle*{2}} \put(30,5){\line(0,1){5}} \put(35,5){$p_5$}

\put(30,10){\circle*{2}} \put(30,10){\line(0,1){5}}
\put(35,10){$p_6$}

\put(30,15){\circle*{2}} \put(30,15){\line(0,1){5}}
\put(35,15){$p_7$}

\put(30,20){\circle*{2}} \put(30,20){\line(0,1){5}}
\put(35,20){$p_8$}

\put(30,25){\circle*{2}} \put(30,25){\line(0,1){5}}
\put(35,25){$p_9$}

\put(30,30){\circle*{2}} \put(30,30){\line(0,1){5}}
\put(35,30){$p_{10}$}

\put(30,35){\circle*{2}} \put(30,35){\line(0,1){5}}
\put(35,35){$p_{11}$}

\put(30,40){\circle*{2}} \put(30,40){\line(0,1){5}}
\put(35,40){$p_{12}$}

\put(30,45){\circle*{2}} \put(35,45){$p_{13}$}

\qbezier[30](30,0)(15,10)(30,20)

\qbezier[20](30,35)(20,40)(30,45)

\qbezier[20](0,0)(-10,5)(0,10)

\end{picture}
\end{center}
\caption{Proximity graph of $\cb_\cf$ in Example \ref{ex1}
}
\end{figure}

\begin{exa}\label{ex2}
{\rm Set $\cf$ the foliation attached to the differential 1-form
$$
\mathbf{\Omega} = (3X_0^2 X_2^3) \;dX_0 - (5X_1^4X_2)\; dX_1 + (5 X_1^5 -3 X_0^3 X_2^2)\; dX_2.
$$
The configuration ${\mathcal B}_{\cf}$ has 19 points $\{p_i\}_{i=1}^{19}$ and only one dicritical divisor: $E_{p_{19}}$. We show the corresponding proximity graph in Figure 2. From the resolution of singularities it is deduced that
$$[K_{\tilde{\cf}}-K_{Z_{\cf}}]=6[L^*] -2[E_{p_1}^*] -3[E_{p_2}^*]-2[E_{p_3}^*] -2[E_{p_4}^*]-\sum_{i=5}^{18}[E_{p_i}^*]-2[E_{p_{19}}^*]$$
and, moreover, it can be checked that
$$(5: -2: -2: -1: -1: \cdots: -1)$$
are the projective coordinates with respect to the basis $\mathbf B$ of the unique element of the set $\mathcal{R}_\cf (\emptyset)$. Applying Algorithm \ref{alg2} we get the pair $(T,{\mathcal A})$,  $T$ being the divisor $5L^*-2E_{p_1}^*-2E_{p_2}^*-\sum_{i=3}^{19} E_{p_i}^*$ and ${\mathcal A}=\{(X_2 =0)\}$. Now, applying Algorithm \ref{alg3} we compute $\gamma = \frac{r+2-\deg(A_1)}{s_0}=\frac{4+2-1}{5}=1$ (step 1). Performing the remaining steps of the algorithm we conclude that $\cf$ admits a rational first integral given by $F = X_1^5 - X_0^3 X_2^2$ and $G=X_2^5$.
}
\end{exa}

\begin{figure}[hbt]\label{fig2}
\setlength{\unitlength}{1mm}
\begin{center}
\begin{picture}(20,35)

\put(0,0){\circle*{2}} \put(0,0){\line(0,1){5}} \put(5,0){$p_1$}

\put(0,5){\circle*{2}}\put(0,5){\line(0,1){5}} \put(5,5){$p_2$}

\put(0,10){\circle*{2}} \put(0,10){\line(0,1){5}} \put(5,10){$p_3$}

\put(0,15){\circle*{2}} \put(0,15){\line(0,1){5}} \put(5,15){$p_4$}

\put(0,20){\circle*{2}} \put(0,20){\line(0,1){5}} \put(5,20){$p_5$}

\put(0,25){\circle*{2}}
\put(5,25){$p_6$}

\put(0,30){$\vdots$}

\put(0,38){\circle*{2}} \put(5,38){$p_{19}$}

\qbezier[20](0,5)(-10,10)(0,15)

\end{picture}
\end{center}
\caption{Proximity graph of $\cb_\cf$ in Examples
\ref{ex2} and \ref{ex4}
}
\end{figure}

\begin{exa}\label{ex3}
{\rm
Consider now the foliation $\cf$
defined by the projective differential 1-form $\mathbf{\Omega}=AdX_0+BdX_1+CdX_2$, where
$$A=8X_0^4X_1^2+10X_0X_1^5+2X_0^5X_2-4X_0^2X_1^3X_2-4X_0^3X_1X_2^2-4X_1^4X_2^2+2X_0X_1^2X_23,$$
$$B=-8X_0^5X_1-10X_0^2X_1^4+10X_0^3X_1^2X_2+5X_1^5X_2-X_0^4X_2^2-2X_0X_1^3X_2^2+2X_0^2X_1X_2^3-X_1^2X_2^4,$$
$$C= -2X_0^6-6X_0^3X_1^3-5X_1^6+5X_0^4X_1X_0+6X_0X_1^4X_2-4X_0^2X_1^2X_2^2+X_1^3X_2^3.$$
$\cb_\cf=\{p_i\}_{i=1}^{10}$ and the reader can see its proximity
graph in Figure 3. Algorithm \ref{alg2} gives the pair $(T,{\mathcal
A})$, where $$T = 10L^*-4\sum_{i=1}^{6} E_{p_i}^*-\sum_{i=7}^{10}
E_{p_i}^*$$ and ${\mathcal A}=\{(F_1=0), (F_2=0)\}$, being $$F_1 =
X_1 X_2 - X_0^2 \mbox{ and } F_2=
2X_0^3X_1^2+X_1^5+X_0^4X_2-2X_0X_1^3X_2-2X_0^2X_1X_2^2+X_1^2X_2^3.$$
The value $\gamma$ in Algorithm \ref{alg3} is $\gamma={\rm
lcm}(2,5)/10=1$ and a basis of $H^0(\gp^2,\pi_{\cf*}\co_{Z_{\cf}}(
T))$ is $\{F:=F_1^5, G:=F_2^2\}$, which defines a rational first
integral of $\cf$ since $\mathbf{\Omega} \wedge (GdF-FdG) =0$. }
\end{exa}

\begin{figure}[hbt]\label{fig3}
\setlength{\unitlength}{1mm}
\begin{center}
\begin{picture}(30,45)

\put(0,0){\circle*{2}} \put(0,0){\line(0,1){5}} \put(5,0){$p_1$}

\put(0,5){\circle*{2}} \put(0,5){\line(0,1){5}} \put(5,5){$p_2$}

\put(0,10){\circle*{2}} \put(0,10){\line(0,1){5}}
\put(5,10){$p_3$}

\put(0,15){\circle*{2}} \put(0,15){\line(0,1){5}}
\put(5,15){$p_4$}

\put(0,20){\circle*{2}} \put(0,20){\line(0,1){5}}
\put(5,20){$p_5$}

\put(0,25){\circle*{2}} \put(0,25){\line(0,1){5}}
\put(5,25){$p_6$}

\put(0,30){\circle*{2}} \put(0,30){\line(0,1){5}}
\put(5,30){$p_{7}$}

\put(0,35){\circle*{2}} \put(0,35){\line(0,1){5}}
\put(5,35){$p_{8}$}

\put(0,40){\circle*{2}} \put(0,40){\line(0,1){5}}
\put(5,40){$p_{9}$}

\put(0,45){\circle*{2}} \put(5,45){$p_{10}$}

\qbezier[30](0,25)(-10,35)(0,45)

\end{picture}
\end{center}
\caption{The proximity graph of ${\mathcal B}_{\cf}$ in Example
\ref{ex3}}
\end{figure}
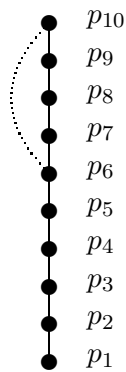

\begin{exa}\label{ex4}
{\rm Let $\cf$ be the foliation given by the differential 1-form
$$
\mathbf{\Omega} = (3X_0^2 X_2^3-X_1^2 X_2^3) \;dX_0 - (5X_1^4X_2-X_0X_1X_2^3)\; dX_1 + (5 X_1^5 -3 X_0^3 X_2^2)\; dX_2.
$$
The configuration ${\mathcal B}_{\cf}$ has 19 points
$\{p_i\}_{i=1}^{19}$, only one dicritical divisor, $E_{p_{19}}$, and
its proximity graph is that of Figure 2. Applying Algorithm
\ref{alg2} one obtains the pair $(T,{\mathcal A})$, where $T$ has
the same expression than the one of Example \ref{ex2} and the unique
element of ${\mathcal A}$ is also the line $(X_2=0)$. Then the value
$\gamma$ given in Algorithm \ref{alg3} is also $\gamma=1$. It can be
checked that $H^0(\gp^2,\pi_{\cf*}\co_{Z_{\cf}}( T))$ is the pencil
generated by $X_1X_2^4$ and $X_2^5$, which is not irreducible and,
therefore,  does not provide a primitive rational first integral of
$\cf$. As a consequence, $\cf$ does not admit a rational first
integral.

}

\end{exa}

\end{document}